\documentclass[a4paper,reqno,11pt]{amsart}

\usepackage{fullpage}
\usepackage{microtype}
\usepackage{mathtools}
\usepackage{amssymb}

\usepackage{enumitem}
\setlist[enumerate]{label=(\roman*), font=\normalfont}

\usepackage[%
    bookmarks=true,%
    bookmarksnumbered=true,%
    colorlinks=true,%
    setpagesize=false,%
    linkcolor=blue,%
    citecolor=magenta,%
    filecolor=magenta,%
    urlcolor=magenta,%
]{hyperref}

\usepackage{aliascnt}
\usepackage{cleveref}

\def\cCrefname#1#2#3{%
    \crefname{#1}{#2}{#3}%
    \Crefname{#1}{#2}{#3}%
}

\cCrefname{section}{Section}{Sections}
\cCrefname{subsection}{Section}{Sections}
\cCrefname{thm}{Theorem}{Theorems}
\cCrefname{prop}{Proposition}{Propositions}
\cCrefname{lem}{Lemma}{Lemmas}
\cCrefname{rem}{Remark}{Remarks}

\newtheorem{thm}{Theorem}[section]

\newaliascnt{prop}{thm}
\newtheorem{prop}[prop]{Proposition}
\aliascntresetthe{prop}

\newaliascnt{lem}{thm}
\newtheorem{lem}[lem]{Lemma}
\aliascntresetthe{lem}

\theoremstyle{remark}

\newaliascnt{rem}{thm}
\newtheorem{rem}[rem]{Remark}
\aliascntresetthe{rem}

\newcommand{\calN}{\mathcal{N}}

\newcommand{\ZZ}{\mathbb{Z}}
\newcommand{\RR}{\mathbb{R}}
\newcommand{\kk}{\Bbbk}

\newcommand{\eb}{\mathbf{e}}
\newcommand{\one}{\mathbf{1}}

\newcommand{\set}[1]{\left\{ #1 \right\}}
\newcommand{\setcond}[2]{\set{#1 : #2}}
\newcommand{\rbra}[1]{\left( #1 \right)}

\DeclarePairedDelimiter{\card}{\lvert}{\rvert}

\DeclareMathOperator{\conv}{conv}
\DeclareMathOperator{\Fill}{Fill}
\let\int\relax
\DeclareMathOperator{\int}{int}

\begin{document}

\title{A palindromicity criterion for the $h$-polynomials of bipartite edge rings}
\author{Yuta Hatasa}

\email{hatasa.y.b8ad@gmail.com}

\subjclass[2020]{
	Primary
	13F65; 
	Secondary
	13D40, 
	05C25, 
	52B20 
}
\keywords{edge rings, $h$-polynomial, Gorenstein, pseudo-Gorenstein, bipartite graphs}

\begin{abstract}
	We study a symmetry problem for the $h$-polynomials of edge rings of bipartite graphs.
	Let $G$ be a bipartite graph and write
	$h(\kk[G];t)=h_0+h_1t+\cdots+h_st^s$.
	We prove that if $\kk[G]$ is pseudo-Gorenstein and $h_1=h_{s-1}$, then $\kk[G]$ is Gorenstein.
	Equivalently, under these assumptions the $h$-polynomial of $\kk[G]$ is palindromic.
	The proof treats the $2$-connected case first by translating the numerical condition $h_1=h_{s-1}$ into a tight-separation condition for non-edges, and then passes to arbitrary bipartite graphs using the block decomposition.
	We also construct a blockwise minimal Gorenstein closure, obtained by adjoining all non-edges not separated by tight acceptable sets, and show that this construction preserves the next-to-leading coefficient of the $h$-polynomial.
\end{abstract}

\maketitle

\section{Introduction}

Let $R=\bigoplus_{k\ge 0}R_k$ be a Cohen--Macaulay homogeneous domain of dimension $d$ over a field $R_0=\kk$.
The Hilbert series of $R$ has the form
\[
	\sum_{k\ge 0}(\dim_\kk R_k)t^k
	=
	\frac{h_0+h_1t+\cdots+h_st^s}{(1-t)^d},
	\quad h_s\ne 0.
\]
The numerator $h(R;t)=h_0+h_1t+\cdots+h_st^s$ is called the \textit{$h$-polynomial} of $R$.
The ring $R$ is called \textit{pseudo-Gorenstein} if $h_s=1$.
When $R$ is a Cohen--Macaulay homogeneous domain, Stanley's theorem states that $R$ is Gorenstein if and only if the $h$-vector is symmetric, that is, $h_i=h_{s-i}$ for all $i$~\cite{stanley1978hilbert}.

We study this question for edge rings of bipartite graphs.
For a finite simple graph $G$ on the vertex set $V(G)=[d]$, let $E(G)$ denote its edge set.
The \textit{edge ring} of $G$ is
\[
	\kk[G]=\kk[t_it_j:\{i,j\}\in E(G)]\subset \kk[t_1,\ldots,t_d].
\]
The edge rings of bipartite graphs are normal, hence Cohen--Macaulay, by the theorem of Ohsugi--Hibi and Simis--Vasconcelos--Villarreal~\cite{ohsugi1998normal,simis1998integral}.

For bipartite graphs, Hatasa, Kowaki, and Matsushita proved that $\kk[G]$ is pseudo-Gorenstein if and only if every block of $G$ is matching-covered~\cite{hatasa2025pseudo}.
In particular, if $G$ is $2$-connected and bipartite, the pseudo-Gorenstein property is equivalent to $G$ being matching-covered.
The natural next question is whether a small amount of additional symmetry of the $h$-vector already forces full symmetry.
Our main result gives an affirmative answer for bipartite graphs.

\begin{thm}\label{thm:main}
	Let $G$ be a bipartite graph and write
	\[
		h(\kk[G];t)=h_0+h_1t+\cdots+h_st^s.
	\]
	If $h_s=1$ and $h_1=h_{s-1}$, then $\kk[G]$ is Gorenstein.
	In particular, $h(\kk[G];t)$ is palindromic.
\end{thm}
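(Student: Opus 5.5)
We argue via the edge polytope $P_G=\conv\setcond{\eb_i+\eb_j}{\{i,j\}\in E(G)}$. Since $\kk[G]$ is normal, $\kk[G]$ is the Ehrhart ring of $P_G$. Its $h$-polynomial is therefore the $h^*$-polynomial of $P_G$. Standard Ehrhart reciprocity facts then give $s=\dim P_G+1-a$, where $a$ is the smallest $k$ with $\int(kP_G)\cap\ZZ^d\neq\emptyset$. They also give $h_s=\card{\int(aP_G)\cap \ZZ^d}$ and
\[
h_{s-1}=\card{\int((a+1)P_G)\cap\ZZ^d}-(\dim P_G+1)\,h_s .
\]
Gorensteinness is equivalent to $aP_G$ having a unique interior lattice point $\mathbf{w}$ such that $aP_G-\mathbf{w}$ is reflexive. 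Equivalently, every facet inequality of $P_G$ takes value exactly $1$ at $\mathbf{w}$. Since $h_1=\card{E(G)}-\dim P_G-1$, the hypothesis $h_1=h_{s-1}$ with $h_s=1$ becomes
\[
\card{\int((a+1)P_G)\cap\ZZ^d}=\card{E(G)},
\]
i.e., the interior points at level $a+1$ are counted by edges.

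\textbf{Reduction to blocks.} The edge ring of $G$ is the tensor product of the edge rings of its blocks, up to the standard identification. Hence the $h$-polynomial is the product of the blockwise $h$-polynomials. Pseudo-Gorensteinness means every block is matching-covered, by \cite{hatasa2025pseudo}, and then every factor has leading coefficient $1$. For a product $\prod_i h^{(i)}$ of polynomials with constant term $1$ and leading coefficient $1$, we have
\[
h_1=\sum_i h^{(i)}_1, \qquad h_{s-1}=\sum_i h^{(i)}_{s_i-1}.
\]
I would then prove the blockwise inequality $h^{(i)}_{s_i-1}\le h^{(i)}_1$ for each pseudo-Gorenstein 2-connected block. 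Given this, equality for $G$ forces equality in every block, and it suffices to treat a $2$-connected matching-covered bipartite graph. Gorensteinness of every block then gives Gorensteinness of the tensor product.

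\textbf{2-connected case.} Let $G$ be $2$-connected, bipartite and matching-covered with bipartition $U\sqcup W$, $\card U=\card W$. Then $P_G$ lies in $\sum_{u\in U}x_u=\sum_{w\in W}x_w=1$, and its facets are described by the known inequalities: $x_v\ge 0$ for the relevant vertices, and $\sum_{v\in N(T)}x_v\ge\sum_{v\in T}x_v$ for \emph{acceptable} sets $T\subset U$. Being matching-covered gives $a=2$, with unique interior point $\one$ in $2P_G$, using $\card{N(T)}\ge\card T+1$. $\kk[G]$ is Gorenstein exactly when every acceptable $T$ is \emph{tight}, meaning $\card{N(T)}=\card T+1$.

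Next I would describe the lattice points in $\int(3P_G)$. These are $\one+\eb_u+\eb_w$ with $u\in U$, $w\in W$, such that no facet inequality is violated strictly. Such a point fails to be interior exactly when some tight acceptable set $T$ has $u\in T$ and $w\notin N(T)$, i.e., $T$ separates the pair $\{u,w\}$. Every edge $uw$ gives an interior point. A non-edge $\{u,w\}$ gives one iff it is not separated by any tight acceptable set. Hence
\[
h_{s-1}-h_1 = \#\{\text{non-edges not separated by tight acceptable sets}\} \ge 0,
\]
which also yields the blockwise inequality used above.

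\textbf{Main obstacle.} The hard step is showing that if every non-edge is separated by a tight acceptable set, then every acceptable set is tight. I would argue by contradiction. Take an acceptable $T$ with $\card{N(T)}\ge\card T+2$, chosen minimal, and build a non-edge $u\in T$, $w\in N(T)$ that no tight set separates. The intended tool is submodularity of $T\mapsto\card{N(T)}-\card T$: tight separating sets are closed under intersection and union where these stay acceptable. This lets us pick a canonical maximal tight set, and the contradiction comes from it being forced inside $T$. Ear decompositions of matching-covered graphs may be needed to handle the acceptability, i.e., connectivity, conditions. This is the step where I expect most of the work.
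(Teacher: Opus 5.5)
Your overall framework is the paper's. You use the block product to get $h_1=\sum_i h^{(i)}_1$ and $h_{s-1}=\sum_i h^{(i)}_{s_i-1}$. In a $2$-connected matching-covered block you identify $h_{s-1}-h_1$ with the number of non-edges not separated by a tight acceptable set, and you finish with the Ohsugi--Hibi criterion.

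Two slips need fixing:
\begin{itemize}
\item With $\card{U}=\card{W}=n$, the point $\one$ satisfies $\sum_{u\in U}x_u=n$, so $a=n$, not $2$. Thus $\one$ is the unique interior lattice point of $nP_G$ and $s=n-1$. The candidates $\one+\eb_u+\eb_w$ sit at level $n+1$, not $3$; indeed $\int(3P_G)$ contains no lattice points once $n\ge 4$.
\item The blockwise inequality is $h^{(i)}_{s_i-1}\ge h^{(i)}_1$, as your own count shows, not $\le$.
\end{itemize}
After these corrections your numerical part is \Cref{prop:difference,prop:nonedge-cut}.

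The genuine gap is the step you flag yourself: if every non-edge is separated by a tight acceptable set, then every acceptable set is tight. This is the core of the paper (\Cref{lem:uncrossing,lem:internal-tight-cover} and \Cref{prop:graph}). Your proposal offers only a plan, and the plan looks for the witness in the wrong place. Unseparated non-edges with $u\in T$ and $w\in N(T)$ need not exist. Take $K_{4,4}$ minus a perfect matching ($x_i\sim y_j$ iff $i\ne j$). There $T=\set{x_1}$ is acceptable with $\delta_G(T)=2$ and is trivially minimal, yet every pair in $T\times N_G(T)$ is an edge. The unseparated non-edge is $(x_1,y_1)$, with $y_1\notin N_G(T)$.

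The paper's argument is direct and needs neither a minimal counterexample nor ear decompositions:
\begin{itemize}
\item It first converts the $X$-side condition to the $Y$-side one via $S=Y\setminus N_G(T)$.
\item Fix an acceptable $A$ and put $B=X\setminus A$ and $C=Y\setminus N_G(A)$. For a fixed $x\in A$ and each $y\in C$, it takes a tight $S_y\ni y$ with $x\notin N_G(S_y)$.
\item It unites these along the connected graph on $B\sqcup C$, obtaining a tight $U_x\supseteq C$ with $x\notin N_G(U_x)$.
\item A count using $\card{N_G(A)}=\card{A}+\card{B}-\card{C}$ and the strict Hall inequality shows that $R_x=A\setminus N_G(U_x\setminus C)$ is tight and contains $x$.
\item Finally the sets $R_x$ are united along the connected graph on $A\sqcup N_G(A)$, giving $\delta_G(A)=1$.
\end{itemize}
The uncrossing step needs only three things: both sets tight, $N_G(U)\cap N_G(V)\ne\emptyset$, and $U\cup V$ proper. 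It does not need acceptability of the intermediate sets, which your sketch tries to maintain. The two connectivity conditions in the definition of an acceptable set are exactly what supply the overlapping neighborhoods.
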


The same method also gives a blockwise Gorenstein closure for pseudo-Gorenstein bipartite edge rings.

\begin{thm}\label{thm:blockwise-closure}
	Let $G$ be a bipartite graph and write
	\[
		h(\kk[G];t)=h_0+h_1t+\cdots+h_st^s.
	\]
	Suppose that $h_s=1$.
	For each block $B$ of $G$, choose a bipartition $X_B\sqcup Y_B$.
	Let $\Fill(B)$ be the set of non-edges $\{x,y\}$ of $B$, with $x\in X_B$ and $y\in Y_B$, that are not separated by any tight acceptable set $T\subset X_B$, in the sense that no such $T$ satisfies
	\[
		x\in T,\qquad y\notin N_B(T).
	\]
	For each block $B$, let $\widehat{B}$ be the graph on the same vertex set as $B$ with
	\[
		E(\widehat{B})=E(B)\cup\Fill(B).
	\]
	Let $\widehat{G}$ be the graph obtained from $G$ by replacing each block $B$ by $\widehat{B}$.
	Then the following assertions hold.
	\begin{enumerate}
		\item $\kk[\widehat{G}]$ is Gorenstein.
		\item Writing
		      \[
			      h(\kk[\widehat{G}];t)=\widehat{h}_0+\widehat{h}_1t+\cdots+\widehat{h}_{\widehat{s}}t^{\widehat{s}},
		      \]
		      one has $\widehat{s}=s$ and $\widehat{h}_{s-1}=h_{s-1}$.
		\item The construction is blockwise minimal: if $B$ is a block of $G$ with at least one edge and $K_B$ is a bipartite graph with bipartition $X_B\sqcup Y_B$ such that $E(B)\subset E(K_B)$ and $\kk[K_B]$ is Gorenstein, then
		      \[
			      E(\widehat{B})\subset E(K_B).
		      \]
	\end{enumerate}
\end{thm}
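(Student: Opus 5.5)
The plan is to reduce everything to the $2$-connected case and to work with the facet description of the edge polytope. For a $2$-connected bipartite graph $B$ with bipartition $X_B\sqcup Y_B$, the facets of the edge polytope $P_B$ are the coordinate inequalities together with the inequalities $\sum_{y\in N_B(T)}x_y\ge\sum_{x\in T}x_x$ for acceptable sets $T\subset X_B$. Here I take ``tight'' to mean $\card{N_B(T)}=\card{T}+1$.

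When $B$ is matching-covered, which is exactly the pseudo-Gorenstein condition by \cite{hatasa2025pseudo}, the following hold:
\begin{itemize}
\item the all-ones direction gives the unique interior lattice point of $aP_B$ at the minimal level $a$, so $s=\card{V(B)}-a$;
\item $\kk[B]$ is Gorenstein iff every facet-defining acceptable set is tight;
\item $h_{s-1}$ is computed from the interior lattice points at level $a+1$ via $h_{s-1}=\card{\int((a+1)P_B)\cap\ZZ^d}-(\dim+1)$, or the analogous formula.
\end{itemize}
For the general graph I will invoke the block decomposition used for \cref{thm:main}. Under this decomposition the $h$-polynomial of $\kk[G]$ is built from those of its blocks in such a way that $s$ is additive and $h_{s-1}$ is determined by the blockwise data. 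In addition, $\kk[G]$ is Gorenstein iff every block is. It therefore suffices to prove (i)--(iii) for a single $2$-connected matching-covered block $B$.

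For (i), I will show that $\widehat{B}$ is again $2$-connected and matching-covered. Adding edges inside a bipartition class preserves $2$-connectivity. It also preserves the property that every edge lies in a perfect matching, because each fill edge $\{x,y\}$ can be completed using an $x$--$y$ alternating path in $B$. Next I show that the tight acceptable sets of $B$ remain tight acceptable sets of $\widehat{B}$. Indeed, by definition no fill edge goes from a tight $T$ to $Y_B\setminus N_B(T)$, so $N_{\widehat{B}}(T)=N_B(T)$, and the connectivity of the two sides is unchanged. Finally I show that no non-tight acceptable set survives as a facet of $P_{\widehat{B}}$. If $T$ is facet-defining for $\widehat{B}$, then it is acceptable for $B$ with $N_B(T)\subset N_{\widehat{B}}(T)$. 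If $T$ were not tight, then matching-coveredness would give $\card{N_B(T)}\ge\card{T}+2$, and I will argue that some pair $x\in T$, $y\notin N_{\widehat{B}}(T)$ cannot be separated by any tight set, which contradicts $\{x,y\}\notin\Fill(B)$. This last point is the step I expect to be the main obstacle. My plan for it is to use the lattice structure of tight sets: unions and intersections of crossing tight sets are again tight, by submodularity of $\card{N(T)}-\card{T}$ in a matching-covered graph. With that, take the union $T'$ of all tight sets contained in $T$, or the tight closure, and find $x\in T\setminus T'$ and $y\in N_B(T)\setminus N_B(T')$ witnessing a fill edge. Once every facet is tight, the Gorenstein criterion gives (i).

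For (ii), note that $\dim P_{\widehat{B}}=\dim P_B$ and that the all-ones point is still the unique interior point at level $a$, so $\widehat{s}=s$. For the $h_{s-1}$ statement, I will show that the lattice points in $\int((a+1)P)$ coincide for $B$ and $\widehat{B}$. A lattice point at level $a+1$ is $\one+\eb_u+\eb_v$ type data, and its interiority is tested only against the facets. The facets of $P_B$ that are removed are the non-tight ones, and these have slack at least $2$ at $\one$, so they can never be violated at level $a+1$. The tight facets are common to both polytopes.

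For (iii), let $K_B\supset B$ be Gorenstein and suppose $\{x,y\}\in\Fill(B)\setminus E(K_B)$. Since $\kk[K_B]$ is Gorenstein, it is matching-covered and all its facets are tight. The non-edge $\{x,y\}$ of $K_B$ violates the facet description of the complete bipartite polytope, so some facet-defining acceptable $T\subset X_B$ of $K_B$ satisfies $x\in T$ and $y\notin N_{K_B}(T)$. Then
\[
\card{T}+1\le\card{N_B(T)}\le\card{N_{K_B}(T)}=\card{T}+1,
\]
so $N_B(T)=N_{K_B}(T)$ and $T$ is tight. It is also acceptable in $B$, since connectivity of the induced pieces passes from $K_B$ to $B$ via $2$-connectivity of $B$. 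Thus $T$ separates $\{x,y\}$ in $B$, which contradicts the fact that $\{x,y\}$ is a fill edge.
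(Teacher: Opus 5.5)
The gap is in (i), at the step you flag yourself: proving that every acceptable set $T\subset X_B$ of $\widehat{B}$ is tight. This is the entire content of the Gorenstein claim, and your sketch does not establish it.

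First, the preliminary claim that such a $T$ is acceptable for $B$ is unjustified, since connectivity does not pass to the subgraph $B$. Second, the proposed mechanism fails. Fix $x\in T$ and $y\in C:=Y_B\setminus N_{\widehat{B}}(T)$, and let $T_y$ be the tight set separating the non-edge $(x,y)$. There is no reason for $T_y$ to lie in $T$, and submodularity against a non-tight $T$ gives at best $\delta_{\widehat{B}}(T_y\cap T)\le\delta_{\widehat{B}}(T)$. So one separating set for one $y$ yields nothing. A pair with $y\in N_B(T)\setminus N_B(T')$ does not help either: it puts $\one+\eb_x+\eb_y$ strictly inside the half-space of $T$, so it cannot contradict anything about that facet. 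What must actually be proved is this: an $x\in T$ lying in no tight subset of $T$ has a partner $y\in C$ that no tight set separates from $x$. You give no argument for it.

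The missing idea is to fix $x\in T$ and use \emph{all} $y\in C$ at once, exploiting that the complementary graph on $(X_B\setminus T)\sqcup C$ is connected. This is where acceptability of $T$ enters, and it is the content of the paper's \Cref{lem:internal-tight-cover} and \Cref{prop:graph}. In your lattice language the argument runs as follows.

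Suppose $y,y'\in C$ have a common neighbour $b\in X_B\setminus T$. Then $b\notin T_y\cup T_{y'}$, so the union is proper while the intersection contains $x$. Submodularity together with \Cref{lem:strict-hall} then makes $T_y\cap T_{y'}$ tight. Ordering $C$ along the connected complementary graph, $R_x=\bigcap_{y\in C}T_y$ is tight, contains $x$, and has no neighbour in $C$. It therefore lies in $T$, because every vertex of $X_B\setminus T$ has a neighbour in $C$. The paper does the same thing on the $Y$-side: it unites the complements $Y_B\setminus N(T_y)$ into a tight $U_x\supset C$ and sets $R_x=X_B\setminus N(U_x)$.

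Only now does your union step work. Connectivity of the graph between $T$ and $N_{\widehat{B}}(T)$ lets you uncross the sets $R_x$ into $T$, so $T$ is tight.

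Your (ii) and the way you obtain the separating set in (iii) are valid alternatives to the paper's route through $\card{\Fill(B)}=h_{s-1}-h_1$ and \Cref{prop:nonedge-cut}. For (ii), comparing interior lattice points at level $n+1$ directly needs only $P_B\subset P_{\widehat{B}}$ and the preservation of tight acceptable sets. For (iii), getting $T$ from $\eb_x+\eb_y\notin P_{K_B}$ is fine.

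However, in (iii), acceptability of $T$ in $B$ does not follow from $2$-connectivity. You need the paper's component count. Each component of the graph between $T$ and $N_B(T)$ carries surplus at least $1$ by \Cref{lem:strict-hall}, so tightness forces a single component. The same argument applies to $Y_B\setminus N_B(T)$, whose neighbourhood is $X_B\setminus T$.
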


The proof first handles the $2$-connected case.
In that case, we express the difference $h_{s-1}-h_1$ as the number of specific interior lattice points of the edge polytope arising from non-edges of $G$.
Thus $h_1=h_{s-1}$ is equivalent to the assertion that every non-edge is separated by a tight acceptable set on the $X$-side, and symmetrically on the $Y$-side.
We then prove a purely graph-theoretic statement: in a $2$-connected matching-covered bipartite graph, this $X$-side separation condition is equivalent to the tightness of every acceptable set.
By the Ohsugi--Hibi criterion, this is exactly the Gorenstein condition.
Finally, for a bipartite graph, the edge ring decomposes as a tensor product over its blocks, so its $h$-polynomial is the product of the $h$-polynomials of the blocks.
This reduces the general bipartite case to the $2$-connected one.

\section{Preliminaries}

\subsection{Edge polytopes and facets}
\label{subsec:edge-polytopes}

Throughout this paper, all graphs are finite and simple.
Let $G$ be a graph on the vertex set $V(G)=[d]$.
For an edge $e=\{i,j\}$, set $\rho(e)=\eb_i+\eb_j\in\RR^d$.
The \textit{edge polytope} of $G$ is
\[
	P_G=\conv\{\rho(e):e\in E(G)\}\subset\RR^d.
\]
If $G$ is connected and bipartite, then $\dim P_G=d-2$, and $\dim\kk[G]=d-1$.
Moreover, since $\kk[G]$ is normal, the Hilbert function of $\kk[G]$ is the Ehrhart function of $P_G$:
\[
	\dim_\kk \kk[G]_k=\card{kP_G\cap\ZZ^d}.
\]

Let $G$ be bipartite with bipartition $V(G)=X\sqcup Y$.
For $T\subset X$, set
\[
	N_G(T)=\setcond{y\in Y}{\{x,y\}\in E(G)\text{ for some }x\in T},
	\qquad
	\delta_G(T)=\card{N_G(T)}-\card{T}.
\]
We use the same notation for subsets of $Y$.
Following Ohsugi--Hibi~\cite{ohsugi1998normal}, a non-empty set $T\subset X$ is called \textit{acceptable} if the bipartite graph $B(T)$ induced by the edges between $T$ and $N_G(T)$ is connected and the induced graph on
\[
	(X\setminus T)\sqcup (Y\setminus N_G(T))
\]
is connected with at least one edge.
Acceptable subsets of $Y$ are defined in the same way.
In this paper, we call a subset $T$ \textit{tight} if $\delta_G(T)=1$.

We also use the terminology of Ohsugi--Hibi~\cite{ohsugi1998normal}: a vertex $v$ of a connected graph is called \textit{ordinary} if $G\setminus v$, the induced graph obtained by deleting $v$, is connected.
With this terminology, a connected graph is called \textit{$2$-connected} if all of its vertices are ordinary.
The facet description of Ohsugi--Hibi states that, for a connected bipartite graph, the coordinate facets of $P_G$ are precisely the facets supported by
\[
	z_v=0
\]
for ordinary vertices $v$.
The remaining facets are supported by the hyperplanes
\[
	\sum_{y\in N_G(T)}z_y-\sum_{x\in T}z_x=0
\]
for acceptable sets $T\subset X$; equivalently, one may use the symmetric formulation with acceptable subsets of $Y$.
In particular, if $G$ is $2$-connected, then every vertex is ordinary, so every coordinate hyperplane $z_v=0$ supports a facet of $P_G$.

\subsection{Blocks}

A \textit{block} of $G$ is a maximal $2$-connected subgraph.

We use the following decomposition of bipartite edge rings along blocks.

\begin{prop}[{\cite[Proposition~10.1.48]{villarreal2001monomial}}]\label{prop:block-product}
	Let $G$ be a bipartite graph, and let $B_1,\ldots,B_m$ be the blocks of $G$.
	Then
	\[
		\kk[G]\cong \kk[B_1]\otimes_\kk\cdots\otimes_\kk\kk[B_m]
	\]
	as standard graded $\kk$-algebras.
	In particular,
	\[
		h(\kk[G];t)=\prod_{i=1}^m h(\kk[B_i];t).
	\]
\end{prop}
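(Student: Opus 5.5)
The statement to prove is \Cref{prop:block-product}: $\kk[G]$ is the tensor product of the edge rings of its blocks, and hence the $h$-polynomials multiply. My plan is to prove the tensor decomposition first and then read off the Hilbert series.

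First I reduce to the case where $G$ is connected. If $G$ has connected components $G_1,\ldots,G_r$, then they use disjoint sets of variables. So $\kk[G]=\kk[G_1]\otimes_\kk\cdots\otimes_\kk\kk[G_r]$, since a subring generated by monomials in disjoint variable sets is the tensor product of the pieces. The blocks of $G$ are the union of the blocks of the $G_i$. Isolated vertices contribute nothing; edgeless blocks give the factor $\kk$.

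For connected $G$ I induct on the number of blocks, using the block--cut tree. I pick a leaf block $B$ attached to the rest $G'$ at a single cut vertex $v$, so that $E(G)=E(B)\sqcup E(G')$ and $V(B)\cap V(G')=\{v\}$.

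There is a natural surjection $\kk[B]\otimes_\kk\kk[G']\to\kk[G]$ of standard graded algebras, sending each edge generator to itself. The key step is injectivity, and I expect this to be the main obstacle. Both sides are spanned by monomials. A basis of the left side is given by pairs $(m_1,m_2)$, where $m_1$ is a monomial of $\kk[B]$ and $m_2$ is a monomial of $\kk[G']$. Hence it suffices to show that the product monomial $m=m_1m_2$ determines $m_1$ and $m_2$.

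To see this, write $m=\prod_u t_u^{a_u}$. The exponents $a_u$ for $u\in V(B)\setminus\{v\}$ are contributed only by $m_1$, so they are determined by $m$. Choose a bipartition $X\sqcup Y$ of $B$, say with $v\in X$. Every edge of $B$ has exactly one endpoint in $X$ and one in $Y$, so $m_1$ has $\deg_X=\deg_Y$. This gives
\[
	\deg_{t_v}m_1=\sum_{y\in Y}a_y-\sum_{x\in X\setminus\{v\}}a_x,
\]
and the right-hand side is determined by $m$. It follows that $m_1$ is determined, and then $m_2=m/m_1$ is determined as well.

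Bipartiteness is exactly what is used here. In a non-bipartite block an odd cycle would break this balance count, as in two triangles sharing a vertex. With injectivity in hand, the surjection is an isomorphism, and induction gives $\kk[G]\cong\bigotimes_i\kk[B_i]$ as graded algebras.

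Finally, Hilbert series multiply under tensor products over $\kk$:
\[
	H(\kk[G];t)=\prod_i H(\kk[B_i];t)=\prod_i\frac{h(\kk[B_i];t)}{(1-t)^{d_i}}.
\]
Here $\sum_i d_i=\dim\kk[G]$, because Krull dimensions add for tensor products of affine domains over $\kk$. Comparing numerators gives $h(\kk[G];t)=\prod_i h(\kk[B_i];t)$.
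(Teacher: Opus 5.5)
Your proof is correct. There is no internal proof to compare it with: the paper gives no argument for this proposition and simply imports it from Villarreal's book.

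Your route is elementary and self-contained, so it can replace the citation. You induct on a leaf block $B$ of the block--cut tree and check injectivity of $\kk[B]\otimes_\kk\kk[G']\to\kk[G]$ on the monomial basis, via the balance identity $\deg_{t_v}m_1=\sum_{y\in Y}a_y-\sum_{x\in X\setminus\{v\}}a_x$.

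A shorter alternative uses the dimension formula for edge rings of connected graphs.
\begin{itemize}
\item $\kk[B]\otimes_\kk\kk[G']$ is the semigroup ring of a product of affine monoids, hence a domain.
\item Its dimension is $(\card{V(B)}-1)+(\card{V(G')}-1)=\card{V(G)}-1=\dim\kk[G]$.
\item So the kernel of the surjection onto the domain $\kk[G]$ is a prime of height zero, and therefore vanishes.
\end{itemize}
That version hides the use of bipartiteness inside the dimension formula. Yours makes it explicit. It also only uses that the leaf block $B$ is bipartite, so it proves a slightly stronger statement: gluing a bipartite graph to an arbitrary graph at a single vertex gives a tensor product. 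Your two-triangles example shows that some bipartiteness is genuinely needed.

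For the final step, you could avoid quoting additivity of Krull dimension. Each $h(\kk[B_i];1)$ is a positive multiplicity, so the pole order at $t=1$ of $\prod_i h(\kk[B_i];t)/(1-t)^{d_i}$ is exactly $\sum_i d_i$. This forces $\dim\kk[G]=\sum_i d_i$.
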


\subsection{Matching-covered bipartite graphs}

A connected graph $G$ is \textit{matching-covered} if every edge of $G$ is contained in a perfect matching.
The proof of the main theorem uses the following elementary strict Hall property for the $2$-connected matching-covered blocks that occur there.

\begin{lem}\label{lem:strict-hall}
	Let $G$ be a $2$-connected matching-covered bipartite graph with bipartition $X\sqcup Y$.
	Then $\card{X}=\card{Y}$ and, for every non-empty proper subset $A\subsetneq X$,
	\[
		\card{N_G(A)}\ge \card{A}+1.
	\]
	The analogous statement holds for non-empty proper subsets of $Y$.
\end{lem}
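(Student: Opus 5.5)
We first show $\card{X}=\card{Y}$, then derive the strict Hall inequality by combining Hall's theorem with matching-coveredness and $2$-connectivity. Since $G$ is matching-covered, it is connected with at least one edge, and every edge lies in a perfect matching. In particular a perfect matching $M$ exists, so $\card{X}=\card{Y}$. Hall's condition then gives $\card{N_G(A)}\ge\card{A}$ for every $A\subset X$. It remains to exclude equality for a non-empty proper subset $A\subsetneq X$.

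Suppose $\card{N_G(A)}=\card{A}$ for some non-empty $A\subsetneq X$, and set $B=N_G(A)\subset Y$. In any perfect matching, the vertices of $A$ are matched into $B$. Since $\card{A}=\card{B}$, every vertex of $B$ is then matched to a vertex of $A$. Consequently no perfect matching contains an edge between $X\setminus A$ and $B$. By matching-coveredness, no such edge exists at all. Edges from $A$ go only to $B$ by the definition of $N_G(A)$. Hence $G$ has no edges between $A\sqcup B$ and $(X\setminus A)\sqcup(Y\setminus B)$.

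Both parts are non-empty: $A\sqcup B\supset A\neq\emptyset$, and $X\setminus A\neq\emptyset$. So $G$ would be disconnected. This contradicts $2$-connectivity, which in particular implies connectivity. Therefore $\card{N_G(A)}\ge\card{A}+1$.

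The statement for subsets of $Y$ follows by exchanging the roles of $X$ and $Y$, since the hypotheses are symmetric. The only delicate step is the observation that equality in Hall's condition forces $B$ to be fully matched into $A$ in every perfect matching. This is a counting argument, so no real obstacle is expected.
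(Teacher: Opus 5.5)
Your proposal is correct and follows essentially the same route as the paper: Hall's theorem gives $\card{N_G(A)}\ge\card{A}$, equality would force every perfect matching to match $N_G(A)$ entirely into $A$, and connectivity then yields an edge between $X\setminus A$ and $N_G(A)$ that lies in no perfect matching, contradicting matching-coveredness. The only difference is ordering: you first rule out such edges and then obtain a disconnection, while the paper first produces such an edge from connectivity and then contradicts matching-coveredness.
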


\begin{proof}
	The equality $\card{X}=\card{Y}$ follows from the existence of a perfect matching.
	Hall's theorem gives $\card{N_G(A)}\ge \card{A}$.
	Suppose that $\card{N_G(A)}=\card{A}$ for some non-empty proper subset $A\subsetneq X$.
	Then every perfect matching matches all vertices of $A$ with the vertices of $N_G(A)$.
	Hence no edge from $X\setminus A$ to $N_G(A)$ can be contained in a perfect matching.
	Since $G$ is connected and $A$ is a non-empty proper subset, such an edge exists, contradicting the matching-covered property.
\end{proof}

\section{The coefficient condition}
\label{sec:coefficient}

We first record a general formula for the linear coefficient of the $h$-polynomial.

\begin{lem}\label{lem:h1}
	Let $G$ be a connected bipartite graph.
	Put $d=\card{V(G)}$ and write
	\[
		h(\kk[G];t)=h_0+h_1t+\cdots+h_st^s.
	\]
	Then
	\[
		h_1=\card{E(G)}-d+1.
	\]
\end{lem}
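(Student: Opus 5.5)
The plan is to compare the first two coefficients of the Hilbert series on both sides. Since $\kk[G]$ is a standard graded domain of dimension $D=d-1$ (using $\dim\kk[G]=d-1$ for connected bipartite $G$), we have
\[
	\sum_{k\ge0}(\dim_\kk\kk[G]_k)t^k=\frac{h(\kk[G];t)}{(1-t)^{D}}.
\]
Multiplying both sides by $(1-t)^D$ and reading off the coefficients of $t^0$ and $t^1$ gives
\[
	h_0=\dim_\kk\kk[G]_0=1,\qquad h_1=\dim_\kk\kk[G]_1-D\,\dim_\kk\kk[G]_0=\dim_\kk\kk[G]_1-(d-1).
\]

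It therefore remains to show that $\dim_\kk\kk[G]_1=\card{E(G)}$. The degree-one component of $\kk[G]$ is spanned by the monomials $t_it_j$ for $\{i,j\}\in E(G)$. Distinct edges give distinct monomials, because a squarefree monomial of degree two determines its pair of indices. Distinct monomials are linearly independent in the polynomial ring, so this spanning set is a basis. Alternatively, one can use the Ehrhart description $\dim_\kk\kk[G]_1=\card{P_G\cap\ZZ^d}$ and note that the only lattice points of $P_G$ are the vertices $\rho(e)$, since every point of $P_G$ has coordinates in $[0,1]$ summing to $2$. Substituting gives $h_1=\card{E(G)}-d+1$.

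The only point requiring care is the dimension count $\dim\kk[G]=d-1$ for connected bipartite $G$, which was recorded in \cref{subsec:edge-polytopes}. This is where connectedness is used: for a disconnected graph the Krull dimension would be smaller, and the formula would change accordingly. There is no real obstacle beyond this step.
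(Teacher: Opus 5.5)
Your proposal is correct and follows essentially the same route as the paper. Both compare the degree-one coefficients of the Hilbert series over $(1-t)^{d-1}$ and use $\dim_\kk\kk[G]_1=\card{E(G)}$. The only difference is that you count $\kk[G]_1$ directly by distinct squarefree monomials, whereas the paper counts the lattice points $\rho(e)$ of $P_G$ through the Ehrhart function; these are equivalent in degree one.
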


\begin{proof}
	Since $G$ is connected and bipartite, $\dim\kk[G]=d-1$.
	The lattice points of $P_G$ are precisely the points $\rho(e)$ for $e\in E(G)$, so
	\[
		\card{P_G\cap\ZZ^d}=\card{E(G)}.
	\]
	Comparing the coefficient of degree one in the Ehrhart series
	\[
		\sum_{k\ge 0}\card{kP_G\cap\ZZ^d}t^k
		=
		\frac{h_0+h_1t+\cdots+h_st^s}{(1-t)^{d-1}}
	\]
	gives $\card{E(G)}=(d-1)+h_1$.
\end{proof}

For the rest of this section, let $G$ be a $2$-connected matching-covered bipartite graph with bipartition $X\sqcup Y$.
By \Cref{lem:strict-hall}, $\card{X}=\card{Y}$; write $\card{X}=\card{Y}=n$ and
\[
	h(\kk[G];t)=h_0+h_1t+\cdots+h_st^s.
\]
Then $\dim P_G=2n-2$, $\dim\kk[G]=2n-1$, and the vector
\[
	\one=(1,\ldots,1)\in\ZZ^{2n}
\]
is the unique lattice point in $\int(nP_G)$.
In particular, $h_s=1$ and $s=n-1$.
The remaining results in this section use these assumptions.

\begin{lem}\label{lem:hs-1}
	Let $G$ be a $2$-connected matching-covered bipartite graph with bipartition $X\sqcup Y$, and use the notation above.
	Then
	\[
		h_{s-1}=\card{\int((n+1)P_G)\cap\ZZ^{2n}}-2n+1.
	\]
\end{lem}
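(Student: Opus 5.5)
We will use Ehrhart reciprocity together with the explicit form of the Ehrhart series. Put $D=\dim P_G=2n-2$, so that
\[
	\sum_{k\ge 0}\card{kP_G\cap\ZZ^{2n}}\,t^k=\frac{h_0+h_1t+\cdots+h_st^s}{(1-t)^{D+1}},\qquad s=n-1 .
\]
By Ehrhart--Macdonald reciprocity,
\[
	\sum_{k\ge 1}\card{\int(kP_G)\cap\ZZ^{2n}}\,t^k=\frac{h_s t^{D+1-s}+h_{s-1}t^{D+2-s}+\cdots+h_0t^{D+1}}{(1-t)^{D+1}} .
\]
Here $D+1-s=(2n-1)-(n-1)=n$. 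This agrees with the fact, recorded above, that the lattice point $\one$ of $\int(nP_G)$ is the first interior lattice point, and that its coefficient $h_s$ equals $1$.

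We then read off the coefficient of $t^{n+1}$ on both sides. On the right we expand $(1-t)^{-(D+1)}=\sum_j\binom{D+j}{j}t^j$. The numerator contributes $h_s t^n$ and $h_{s-1}t^{n+1}$, and no other term reaches degree at most $n+1$. The coefficient of $t^{n+1}$ is therefore
\[
	h_s\binom{D+1}{1}+h_{s-1}=(2n-1)+h_{s-1}.
\]
Equating this with $\card{\int((n+1)P_G)\cap\ZZ^{2n}}$ gives exactly the claimed formula.

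The argument is routine. The only points needing care are the exact reciprocity statement for a lattice polytope of dimension $D$ in $\RR^{2n}$ and the resulting degree shift. Reciprocity applies because $P_G$ has integral vertices and its Ehrhart function counts lattice points of $\ZZ^{2n}$ in the affine span, with interior taken relative to that span. We should also confirm $s=n-1$ via $D+1-s=n$, which is already stated in the setup preceding the lemma.
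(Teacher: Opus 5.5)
Your proposal is correct and follows essentially the same route as the paper: apply Ehrhart reciprocity to get $\sum_{k\ge1}\card{\int(kP_G)\cap\ZZ^{2n}}t^k = t^{2n-1}h(\kk[G];t^{-1})/(1-t)^{2n-1}$, then read off the coefficient of $t^{n+1}$ using $s=n-1$ and $h_s=1$. Your explicit binomial expansion, giving $(2n-1)h_s+h_{s-1}$, just spells out the step the paper leaves implicit.
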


\begin{proof}
	Ehrhart reciprocity gives
	\[
		\sum_{k\ge 1}\card{\int(kP_G)\cap\ZZ^{2n}}t^k
		=
		\frac{t^{2n-1}h(\kk[G];t^{-1})}{(1-t)^{2n-1}}.
	\]
	Taking the coefficient of $t^{n+1}$ and using $s=n-1$ and $h_s=1$, we obtain
	\[
		\card{\int((n+1)P_G)\cap\ZZ^{2n}}
		=
		h_{s-1}+(2n-1)h_s
		=
		h_{s-1}+2n-1.\qedhere
	\]
\end{proof}

\begin{prop}\label{prop:difference}
	Let $G$ be a $2$-connected matching-covered bipartite graph with bipartition $X\sqcup Y$, and use the notation above.
	Let
	\[
		\calN_G=\setcond{(x,y)\in X\times Y}{\{x,y\}\notin E(G)}
	\]
	be the set of non-edges across the bipartition.
	Then
	\[
		h_{s-1}-h_1
		=
		\card{\setcond{(x,y)\in\calN_G}{\one+\eb_x+\eb_y\in\int((n+1)P_G)}}.
	\]
\end{prop}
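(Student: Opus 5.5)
The plan is to determine every lattice point of $\int((n+1)P_G)$ explicitly and then compare the count with \Cref{lem:h1} and \Cref{lem:hs-1}. Since $G$ is connected and bipartite, the affine hull of $kP_G$ is $\setcond{z\in\RR^{2n}}{\sum_{x\in X}z_x=\sum_{y\in Y}z_y=k}$. Because $G$ is $2$-connected, every vertex is ordinary. So by the Ohsugi--Hibi facet description in \Cref{subsec:edge-polytopes}, every coordinate hyperplane $z_v=0$ supports a facet of $P_G$. Hence an interior point $z$ of $(n+1)P_G$ satisfies $z_v>0$ for all $v$, and if $z$ is a lattice point then $z_v\ge 1$ for all $v$.

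First I would show that the candidates have a fixed shape. Let $z\in\int((n+1)P_G)\cap\ZZ^{2n}$. Then $\sum_{x\in X}z_x=n+1$ with $n$ coordinates, each at least $1$, so exactly one $x\in X$ has $z_x=2$ and all other coordinates in $X$ equal $1$. The same holds on the $Y$-side. Hence $z=\one+\eb_x+\eb_y$ for a unique pair $(x,y)\in X\times Y$. This gives an injection from the set of interior lattice points into $X\times Y$, and $X\times Y$ is the disjoint union of the edges (as ordered pairs across the bipartition) and $\calN_G$.

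Next I would show that every edge pair is realized. If $\{x,y\}\in E(G)$, then $\one+\eb_x+\eb_y=\one+\rho(\{x,y\})$. Here $\one\in\int(nP_G)$ (stated above) and $\rho(\{x,y\})\in P_G$. Because the relative interior of $nP_G$ plus a point of $P_G$ lies in the relative interior of $nP_G+P_G=(n+1)P_G$, this point is interior. Hence
\[
\card{\int((n+1)P_G)\cap\ZZ^{2n}}=\card{E(G)}+\card{\setcond{(x,y)\in\calN_G}{\one+\eb_x+\eb_y\in\int((n+1)P_G)}}.
\]

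Finally I would substitute this into \Cref{lem:hs-1}, which gives $h_{s-1}=\card{E(G)}+N-2n+1$, where $N$ denotes the count on the right of the proposition. Subtracting $h_1=\card{E(G)}-2n+1$ from \Cref{lem:h1} yields $h_{s-1}-h_1=N$, as claimed.

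The main point needing care is the first step: the strict positivity of all coordinates of interior points. This depends on all coordinate hyperplanes being facets, which is where $2$-connectedness enters. The remaining steps are bookkeeping together with the standard fact that the sum of an interior point and a polytope point lies in the interior of the Minkowski sum.
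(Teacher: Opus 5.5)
Your proposal is correct and follows the paper's proof essentially step for step: strict positivity of interior lattice points from the coordinate facets (via $2$-connectedness), the forced shape $\one+\eb_x+\eb_y$, realization of every edge via $\one\in\int(nP_G)$, and substitution into \Cref{lem:h1,lem:hs-1}. The details you add, namely the pigeonhole count forcing exactly one coordinate equal to $2$ on each side and the Minkowski-sum interiority (valid here because $nP_G$ and $P_G$ have parallel affine hulls), are precisely what the paper leaves implicit.
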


\begin{proof}
	Every lattice point of $(n+1)P_G$ has non-negative integral coordinates, and the sums of its $X$-coordinates and $Y$-coordinates are both $n+1$.
	If such a point is in the interior, then all its coordinates are positive.
	Indeed, for each vertex $v$, the coordinate hyperplane $z_v=0$ supports a facet of $P_G$ by \Cref{subsec:edge-polytopes}, while $P_G$ is contained in the half-space $z_v\ge 0$.
	Thus an interior point cannot lie on any coordinate hyperplane.
	Hence every lattice point of $\int((n+1)P_G)$ has the form
	\[
		\one+\eb_x+\eb_y
	\]
	for some $x\in X$ and $y\in Y$.
	If $\{x,y\}\in E(G)$, then
	\[
		\one+\eb_x+\eb_y=\one+\rho(\{x,y\})
	\]
	lies in $\int((n+1)P_G)$, since $\one\in\int(nP_G)$.
	Therefore,
	\[
		\card{\int((n+1)P_G)\cap\ZZ^{2n}}
		=
		\card{E(G)}
		+
		\card{\setcond{(x,y)\in\calN_G}{\one+\eb_x+\eb_y\in\int((n+1)P_G)}}.
	\]
	The assertion follows from \Cref{lem:h1,lem:hs-1}.
\end{proof}

\begin{prop}\label{prop:nonedge-cut}
	Let $G$ be a $2$-connected matching-covered bipartite graph with bipartition $X\sqcup Y$, and use the notation above.
	The following conditions are equivalent:
	\begin{enumerate}
		\item $h_1=h_{s-1}$;
		\item for every non-edge $(x,y)\in\calN_G$, there exists a tight acceptable set $T\subset X$ such that
		      \[
			      x\in T,\quad y\notin N_G(T);
		      \]
		\item for every non-edge $(x,y)\in\calN_G$, there exists a tight acceptable set $S\subset Y$ such that
		      \[
			      y\in S,\quad x\notin N_G(S).
		      \]
	\end{enumerate}
\end{prop}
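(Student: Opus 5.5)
The plan is to reduce everything to \Cref{prop:difference}. By that proposition, $h_1=h_{s-1}$ holds if and only if, for every non-edge $(x,y)\in\calN_G$, the lattice point
\[
p_{x,y}=\one+\eb_x+\eb_y
\]
fails to lie in $\int((n+1)P_G)$. So (i)$\Leftrightarrow$(ii) amounts to the following claim: for a non-edge $(x,y)$, the point $p_{x,y}$ is not interior exactly when some tight acceptable $T\subset X$ satisfies $x\in T$ and $y\notin N_G(T)$.

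To decide interiority I would use the facet description recalled in \Cref{subsec:edge-polytopes}. Since $G$ is $2$-connected, $(n+1)P_G$ is cut out, inside its affine hull $\{\sum_{X}z=\sum_{Y}z=n+1\}$, by two families of inequalities: the coordinate inequalities $z_v\ge 0$, and the inequalities
\[
\sum_{y'\in N_G(T)}z_{y'}-\sum_{x'\in T}z_{x'}\ge 0
\]
for acceptable $T\subset X$. The correct sign holds on $P_G$ because every edge meeting $T$ has its other endpoint in $N_G(T)$.

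Now check $p_{x,y}$ against these.
\begin{itemize}
\item Its $X$- and $Y$-coordinate sums are both $n+1$, so it lies in the affine hull.
\item All its coordinates are positive, so every coordinate inequality is strict.
\item For an acceptable $T\subset X$, the linear form evaluates to
\[
\delta_G(T)+[y\in N_G(T)]-[x\in T].
\]
\end{itemize}
An acceptable $T$ is a proper subset of $X$, because $(X\setminus T)\sqcup(Y\setminus N_G(T))$ must contain an edge. Hence \Cref{lem:strict-hall} gives $\delta_G(T)\ge 1$, so the value above is always $\ge 0$. This shows $p_{x,y}\in(n+1)P_G$. The value is $0$ exactly when $\delta_G(T)=1$, $x\in T$ and $y\notin N_G(T)$. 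Therefore $p_{x,y}$ lies on the boundary if and only if such a tight acceptable $T$ exists, which proves (i)$\Leftrightarrow$(ii).

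For (i)$\Leftrightarrow$(iii) I would rerun the same computation with the symmetric facet description by acceptable subsets $S\subset Y$, using the $Y$-version of \Cref{lem:strict-hall}. The value at $p_{x,y}$ is then $\delta_G(S)+[x\in N_G(S)]-[y\in S]$, which vanishes exactly in the situation of (iii).

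The only delicate point is the facet description. I need that every facet of $P_G$ is either a coordinate facet or comes from an acceptable $T\subset X$, and, separately, that the $Y$-side family also describes all the non-coordinate facets. I would take both directly from the Ohsugi--Hibi description recalled in \Cref{subsec:edge-polytopes}, since "equivalently, one may use the symmetric formulation" is stated there. Beyond that, the argument is a direct evaluation of linear forms.
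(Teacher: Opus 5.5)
Your proposal is correct and is essentially the paper's own proof: reduce via \Cref{prop:difference} to whether $\one+\eb_x+\eb_y$ is interior, evaluate each acceptable-set facet form to get $\delta_G(T)+[y\in N_G(T)]-[x\in T]$, use \Cref{lem:strict-hall} to see that the value vanishes only in the tight separating case, and rerun the computation on the $Y$-side for (iii); your explicit remark that acceptable sets are proper subsets, which is what makes \Cref{lem:strict-hall} applicable, is left implicit in the paper. The only addition in the paper is a direct combinatorial link between (ii) and (iii) via the complement $S=Y\setminus N_G(T)$, which is not logically needed once both conditions are shown equivalent to (i).
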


\begin{proof}
	We first prove that (i) implies (ii).
	By \Cref{prop:difference}, the equality $h_1=h_{s-1}$ means precisely that none of the points $\one+\eb_x+\eb_y$ corresponding to non-edges lies in $\int((n+1)P_G)$.
	Fix a non-edge $(x,y)\in\calN_G$ and put
	\[
		p=\one+\eb_x+\eb_y.
	\]
	The point $p$ belongs to the affine span of $(n+1)P_G$, since the sum of its $X$-coordinates and the sum of its $Y$-coordinates are both $n+1$.
	Moreover, $p$ has positive coordinates, so it satisfies all strict inequalities defined by the coordinate facets.
	Therefore, if $p$ is not in $\int((n+1)P_G)$, then, by the Ohsugi--Hibi facet description in the $X$-side form, $p$ must fail the strict inequality corresponding to an acceptable set $T\subset X$.
	We evaluate the linear form defining this facet at $p$.
	The vector $\one$ contributes $\card{N_G(T)}-\card{T}=\delta_G(T)$.
	Since $x\in X$ and $y\in Y$, the additional vector $\eb_y$ contributes $1$ exactly when $y\in N_G(T)$, while the additional vector $\eb_x$ is subtracted and contributes $-1$ exactly when $x\in T$.
	Equivalently, if
	\[
		\epsilon_y=
		\begin{cases}
			1 & \text{if } y\in N_G(T), \\
			0 & \text{otherwise}
		\end{cases}
		\quad\text{and}\quad
		\epsilon_x=
		\begin{cases}
			1 & \text{if } x\in T, \\
			0 & \text{otherwise},
		\end{cases}
	\]
	then the contribution of $\eb_x+\eb_y$ is $\epsilon_y-\epsilon_x$.
	Hence
	\[
		\sum_{v\in N_G(T)}(\one+\eb_x+\eb_y)_v
		-
		\sum_{u\in T}(\one+\eb_x+\eb_y)_u
		=
		\delta_G(T)+\epsilon_y-\epsilon_x.
	\]
	By \Cref{lem:strict-hall}, $\delta_G(T)\ge 1$.
	Hence the above quantity is non-positive if and only if
	\[
		\delta_G(T)=1,\qquad x\in T,\qquad y\notin N_G(T).
	\]
	Thus (ii) holds.

	Conversely, assume (ii).
	For each non-edge $(x,y)\in\calN_G$, choose a tight acceptable set $T\subset X$ such that $x\in T$ and $y\notin N_G(T)$.
	The same computation gives
	\[
		\sum_{v\in N_G(T)}(\one+\eb_x+\eb_y)_v
		-
		\sum_{u\in T}(\one+\eb_x+\eb_y)_u
		=
		0.
	\]
	Hence $\one+\eb_x+\eb_y$ lies on the corresponding acceptable facet and is not in $\int((n+1)P_G)$.
	Therefore the set counted in \Cref{prop:difference} is empty, and $h_1=h_{s-1}$.
	This proves the equivalence of (i) and (ii).
	The same argument, using the $Y$-side form of the Ohsugi--Hibi facet description, gives the direct equivalence of (i) and (iii).

	It remains to relate (ii) and (iii) directly.
	Assume (ii).
	Let $(x,y)\in\calN_G$, and choose $T\subset X$ as in (ii).
	Put $S=Y\setminus N_G(T)$.
	Then $y\in S$.
	Since $T$ is acceptable, the graph on $(X\setminus T)\sqcup S$ is connected, and hence $N_G(S)=X\setminus T$.
	Thus $x\notin N_G(S)$.
	Moreover, since $T$ is tight,
	\[
		\card{N_G(S)}
		=
		\card{X\setminus T}
		=
		n-\card{T}
		=
		n-\card{N_G(T)}+1
		=
		\card{S}+1.
	\]
	Hence $S$ is tight.
	The two connected graphs in the acceptable condition are interchanged, so $S$ is acceptable.
	The implication from (iii) to (ii) is symmetric.
\end{proof}

\section{A graph-theoretic criterion}
\label{sec:graph}

In this section we prove the graph-theoretic implication needed in the $2$-connected case of the main theorem.
The key point is that, in the graph-theoretic setting, the $X$-side tight acceptable separation condition in \Cref{prop:nonedge-cut} is equivalent to the tightness of every acceptable set.
Throughout this section, the statements are formulated for $2$-connected matching-covered bipartite graphs, the situation supplied by the pseudo-Gorenstein criterion in the $2$-connected case.
We first record a simple uncrossing lemma for tight subsets.

\begin{lem}[Uncrossing]\label{lem:uncrossing}
	Let $G$ be a $2$-connected matching-covered bipartite graph with bipartition $X\sqcup Y$.
	Suppose that $U,V\subset X$ are tight, $U\cup V\subsetneq X$, and
	\[
		N_G(U)\cap N_G(V)\ne\emptyset.
	\]
	Then $U\cup V$ is tight.
	The same assertion holds after exchanging $X$ and $Y$.
\end{lem}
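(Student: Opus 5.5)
The plan is to use submodularity of the neighbourhood function together with the strict Hall property of \Cref{lem:strict-hall}.

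First, for arbitrary $U,V\subset X$ one has
\[
	N_G(U\cup V)=N_G(U)\cup N_G(V),
	\qquad
	N_G(U\cap V)\subset N_G(U)\cap N_G(V).
\]
Therefore
\[
	\card{N_G(U\cup V)}+\card{N_G(U\cap V)}
	\le
	\card{N_G(U)}+\card{N_G(V)}
	=
	\card{U}+\card{V}+2
	=
	\card{U\cup V}+\card{U\cap V}+2,
\]
where we used that $U$ and $V$ are tight. This gives
\[
	\delta_G(U\cup V)+\delta_G(U\cap V)\le 2.
\]
Since $U\cup V$ is non-empty and $U\cup V\subsetneq X$, \Cref{lem:strict-hall} gives $\delta_G(U\cup V)\ge 1$.

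Now we split into cases according to $U\cap V$.

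If $U\cap V\neq\emptyset$, then $U\cap V$ is also a non-empty proper subset of $X$, so \Cref{lem:strict-hall} gives $\delta_G(U\cap V)\ge 1$. Combined with the inequality above, this forces $\delta_G(U\cup V)=1$.

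If $U\cap V=\emptyset$, the term $\card{N_G(U\cap V)}$ equals $0$, so the bound above is too weak. Here the hypothesis $N_G(U)\cap N_G(V)\ne\emptyset$ is used directly. Since $N_G(U)$ and $N_G(V)$ meet,
\[
	\card{N_G(U\cup V)}\le\card{N_G(U)}+\card{N_G(V)}-1=\card{U}+\card{V}+1=\card{U\cup V}+1,
\]
where the last equality uses that $U$ and $V$ are disjoint. So $\delta_G(U\cup V)\le 1$, and \Cref{lem:strict-hall} again yields equality. The same case split covers the general situation: one uses $\card{N_G(U)\cup N_G(V)}=\card{N_G(U)}+\card{N_G(V)}-\card{N_G(U)\cap N_G(V)}$ and bounds $\card{N_G(U)\cap N_G(V)}$ from below by $\card{N_G(U\cap V)}$ when the intersection is non-empty, and by $1$ when it is empty.

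The statement with $X$ and $Y$ exchanged follows verbatim, since \Cref{lem:strict-hall} is symmetric in the two sides. The main point to watch is the disjoint case. There the intersection term gives no Hall bound, and this is exactly where the hypothesis $N_G(U)\cap N_G(V)\neq\emptyset$ is needed. The other ingredient to keep in view is the assumption $U\cup V\subsetneq X$, which ensures that strict Hall applies to $U\cup V$.
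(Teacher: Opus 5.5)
Your proposal is correct and follows essentially the same route as the paper. Both arguments apply inclusion--exclusion to $N_G(U)\cup N_G(V)$ and split on whether $U\cap V$ is empty. In the disjoint case the hypothesis $N_G(U)\cap N_G(V)\neq\emptyset$ is used, and otherwise the strict Hall property for $U\cap V$; strict Hall for $U\cup V\subsetneq X$ then supplies the reverse inequality.
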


\begin{proof}
	Since $U$ and $V$ are tight, we have $\delta_G(U)=\delta_G(V)=1$.
	Since $N_G(U\cup V)=N_G(U)\cup N_G(V)$, inclusion--exclusion gives
	\[
		\delta_G(U\cup V)
		=
		\delta_G(U)+\delta_G(V)-\rbra{\card{N_G(U)\cap N_G(V)}-\card{U\cap V}}.
	\]
	If $U\cap V=\emptyset$, then the hypothesis gives
	\[
		\card{N_G(U)\cap N_G(V)}-\card{U\cap V}\ge 1.
	\]
	If $U\cap V\ne\emptyset$, then $N_G(U\cap V)\subset N_G(U)\cap N_G(V)$, and \Cref{lem:strict-hall} gives
	\[
		\card{N_G(U)\cap N_G(V)}-\card{U\cap V}
		\ge
		\card{N_G(U\cap V)}-\card{U\cap V}
		\ge 1.
	\]
	Therefore $\delta_G(U\cup V)\le 1$.
	On the other hand, $U\cup V$ is a proper subset of $X$, and \Cref{lem:strict-hall} gives $\delta_G(U\cup V)\ge 1$.
	Thus $\delta_G(U\cup V)=1$, as desired.
	The proof of the symmetric assertion is identical.
\end{proof}

The next lemma isolates the local construction that produces tight subsets inside an acceptable set.

\begin{lem}[Internal tight cover]\label{lem:internal-tight-cover}
	Let $G$ be a $2$-connected matching-covered bipartite graph with bipartition $X\sqcup Y$.
	Assume that for every non-edge $(x,y)\in X\times Y$ there exists a tight subset $S\subset Y$ such that
	\[
		y\in S,\qquad x\notin N_G(S).
	\]
	If $A\subset X$ is an acceptable set, then, for every $x\in A$, there exists a tight subset $R_x\subset A$ of $X$ such that $x\in R_x$.
\end{lem}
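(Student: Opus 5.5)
The idea is to build a single tight subset $S\subset Y$ that contains all of $Y\setminus N_G(A)$ and whose neighbourhood misses $x$. Then $R_x=X\setminus N_G(S)$ will be tight, will contain $x$, and will lie inside $A$.

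\emph{Step 1: the complement of an acceptable set.} Put $Y'=Y\setminus N_G(A)$ and $X'=X\setminus A$. By acceptability, the induced graph on $X'\sqcup Y'$ is connected with at least one edge, so $Y'\neq\emptyset$. There are no edges between $A$ and $Y'$, so $N_G(Y')\subset X'$. By connectivity every vertex of $X'$ has a neighbour in $Y'$, hence $N_G(Y')=X'$.

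\emph{Step 2: one tight set per vertex of $Y'$.} Fix $x\in A$. For every $y\in Y'$, the pair $(x,y)$ is a non-edge. The hypothesis therefore gives a tight $S_y\subset Y$ with $y\in S_y$ and $x\notin N_G(S_y)$.

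\emph{Step 3: merge the $S_y$ by \Cref{lem:uncrossing}, applied on the $Y$-side.}
\begin{itemize}
\item Order $Y'=\{y_1,\dots,y_k\}$ so that each $y_j$ with $j\ge 2$ is at distance $2$, in the connected graph on $X'\sqcup Y'$, from some $y_i$ with $i<j$. Such an order exists because that graph is connected.
\item Set $S^{(1)}=S_{y_1}$ and $S^{(j)}=S^{(j-1)}\cup S_{y_j}$. Inductively, $S^{(j-1)}$ is tight, contains $y_1,\dots,y_{j-1}$, and satisfies $x\notin N_G(S^{(j-1)})$.
\item If $u\in X'$ is a common neighbour of $y_i$ and $y_j$, then $u\in N_G(S^{(j-1)})\cap N_G(S_{y_j})$, because $y_i\in S^{(j-1)}$ and $y_j\in S_{y_j}$. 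So the neighbourhoods intersect.
\item The union is a proper subset of $Y$. Otherwise its neighbourhood would be $X\ni x$, while $x\notin N_G(S^{(j-1)})\cup N_G(S_{y_j})=N_G(S^{(j)})$.
\item So \Cref{lem:uncrossing} applies, $S^{(j)}$ is tight, and $x\notin N_G(S^{(j)})$.
\end{itemize}
The final set $S=S^{(k)}$ is tight, satisfies $Y'\subset S$, and has $x\notin N_G(S)$.

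\emph{Step 4: pass to the $X$-side.} Put $R_x=X\setminus N_G(S)$.
\begin{itemize}
\item $x\in R_x$ by Step 3.
\item $R_x\subset X\setminus N_G(Y')=X\setminus X'=A$, using Step 1 and $Y'\subset S$.
\item There are no edges between $R_x$ and $S$, so $N_G(R_x)\subset Y\setminus S$. Hence $\card{N_G(R_x)}\le n-\card{S}$.
\item Since $S$ is tight, $\card{R_x}=n-\card{S}-1$.
\item $R_x$ is a non-empty proper subset of $X$: it contains $x$, and $N_G(S)\ne\emptyset$. So \Cref{lem:strict-hall} gives $\card{N_G(R_x)}\ge\card{R_x}+1=n-\card{S}$.
\end{itemize}
Equality therefore holds, so $R_x$ is tight.

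The main obstacle is forcing $R_x\subset A$. A single $S_y$ only gives a tight set containing $x$ with no control on its position. The point of Step 3 is that connectivity of the complementary part of an acceptable set is exactly what lets the uncrossing lemma glue all the $S_y$ together.
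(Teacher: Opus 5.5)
Your proof is correct and follows the paper's route. Both arguments merge the sets $S_y$, for $y\in Y\setminus N_G(A)$, with the $Y$-side uncrossing lemma along the connected complementary graph, producing a tight $S\supseteq Y\setminus N_G(A)$ with $x\notin N_G(S)$, and then take the complement of its neighbourhood. Your $R_x=X\setminus N_G(S)$ is exactly the paper's $A\setminus Q_x$, and your direct strict-Hall count replaces the paper's bookkeeping with $r$, $P_x$, $Q_x$; your explicit distance-$2$ ordering also makes the paper's ``repeated use'' of uncrossing precise.
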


\begin{proof}
	Put
	\[
		D=N_G(A),\qquad B=X\setminus A,\qquad C=Y\setminus D.
	\]
	Since $A$ is acceptable, the induced graphs on $A\sqcup D$ and $B\sqcup C$ are connected, and the latter graph has at least one edge.
	There is no edge between $A$ and $C$.
	Since the graph on $B\sqcup C$ is connected, this gives $N_G(C)=B$.
	Thus $C$ is an acceptable subset of $Y$.

	Put
	\[
		r=\delta_G(C)=\card{B}-\card{C}.
	\]
	Since $\card{X}=\card{Y}$, we have
	\[
		\card{D}
		=
		\card{Y}-\card{C}
		=
		\card{X}-\card{B}+r
		=
		\card{A}+r.
	\]
	For each $x\in A$, we construct such a set $R_x$ as follows.
	For every $y\in C$, the pair $(x,y)$ is a non-edge.
	Hence there is a tight set $S_y\subset Y$ such that $y\in S_y$ and $x\notin N_G(S_y)$.

	The graph on $B\sqcup C$ is connected.
	Moreover, all intermediate unions avoid $x$ in their neighborhoods, and hence are proper subsets of $Y$.
	Repeated use of the symmetric form of \Cref{lem:uncrossing} yields a tight set $U_x\subset Y$ such that
	\[
		C\subset U_x,\qquad x\notin N_G(U_x).
	\]
	Put
	\[
		P_x=U_x\setminus C\subset D,\qquad
		Q_x=N_G(P_x)\cap A.
	\]
	Then $N_G(U_x)=B\sqcup Q_x$, and the tightness of $U_x$ gives
	\[
		1=\card{B}+\card{Q_x}-\card{C}-\card{P_x}.
	\]
	Hence
	\[
		\card{P_x}-\card{Q_x}=r-1.
	\]

	Let $R_x=A\setminus Q_x$.
	Since $x\notin N_G(U_x)$, we have $x\in R_x$.
	Also, no vertex of $C$ is adjacent to $R_x\subset A$, and no vertex of $P_x$ is adjacent to $R_x$ by the definition of $Q_x$.
	Therefore $N_G(R_x)\subset D\setminus P_x$.
	Moreover,
	\[
		\card{D\setminus P_x}
		=
		\card{D}-\card{P_x}
		=
		\card{A}+r-\rbra{\card{Q_x}+r-1}
		=
		\card{R_x}+1.
	\]
	Since $x\in R_x\subset A$ and $A$ is acceptable, $R_x$ is a non-empty proper subset of $X$.
	Hence \Cref{lem:strict-hall} gives
	\[
		\card{N_G(R_x)}\ge \card{R_x}+1.
	\]
	Thus
	\[
		N_G(R_x)=D\setminus P_x,\qquad \card{N_G(R_x)}=\card{R_x}+1.
	\]
	Hence $R_x$ is tight.
\end{proof}

\begin{prop}\label{prop:graph}
	Let $G$ be a $2$-connected matching-covered bipartite graph with bipartition $X\sqcup Y$.
	The following conditions are equivalent:
	\begin{enumerate}
		\item for every non-edge $(x,y)\in X\times Y$, there exists a tight acceptable set $T\subset X$ such that
		      \[
			      x\in T,\quad y\notin N_G(T);
		      \]
		\item every acceptable set is tight.
	\end{enumerate}
\end{prop}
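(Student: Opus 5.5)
The plan is to prove the two implications separately. The direction (ii)$\Rightarrow$(i) goes through the lattice-point count of \Cref{sec:coefficient}. The direction (i)$\Rightarrow$(ii) is the graph-theoretic core and combines \Cref{lem:internal-tight-cover} with \Cref{lem:uncrossing}.

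\emph{(ii)$\Rightarrow$(i).} Assume every acceptable set is tight, and let $p\in\int((n+1)P_G)\cap\ZZ^{2n}$.
\begin{enumerate}
\item By the proof of \Cref{prop:difference}, $p=\one+\eb_x+\eb_y$ with all coordinates positive, so $q=p-\one=\eb_x+\eb_y$ is a non-negative lattice point.
\item The vector $q$ satisfies $z_v\ge 0$ for every coordinate facet. It also has $X$-sum and $Y$-sum equal to $1$, so it lies in the affine span of $P_G$.
\item For an acceptable $T\subset X$, the facet form $\sum_{N_G(T)}z-\sum_T z$ takes the value $\delta_G(T)=1$ at $\one$. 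Strict positivity at $p$ makes the form at least $1$ at $p$, hence at least $0$ at $q$.
\item By the Ohsugi--Hibi facet description, these are all the facets, so $q\in P_G$. Since $q$ is a lattice point of $P_G$, it equals $\rho(e)$ for some edge $e$, i.e.\ $\{x,y\}\in E(G)$.
\end{enumerate}
Thus no non-edge contributes to the count in \Cref{prop:difference}, so $h_1=h_{s-1}$. \Cref{prop:nonedge-cut}, (i)$\Rightarrow$(ii), then gives exactly condition (i) here.

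\emph{(i)$\Rightarrow$(ii), acceptable subsets of $X$.} Condition (i) is condition (ii) of \Cref{prop:nonedge-cut}. Its proof shows directly that (ii)$\Rightarrow$(iii) there, so every non-edge is separated by a tight acceptable $S\subset Y$. In particular the hypothesis of \Cref{lem:internal-tight-cover} holds.
\begin{enumerate}
\item Fix an acceptable $A\subset X$ and set $D=N_G(A)$. The lemma gives, for each $x\in A$, a tight $R_x\subset A$ with $x\in R_x$; hence $A=\bigcup_{x\in A}R_x$ and $D=\bigcup_x N_G(R_x)$.
\item Form the overlap graph on the family $\{R_x\}$, joining $R_x$ and $R_{x'}$ when $N_G(R_x)\cap N_G(R_{x'})\ne\emptyset$.
\item Suppose this graph were disconnected. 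Then $A=A_1\sqcup A_2$ with $A_1,A_2\ne\emptyset$ unions of whole classes and $N_G(A_1)\cap N_G(A_2)=\emptyset$. This would disconnect $B(A)$, the graph on $A\sqcup D$, contradicting acceptability.
\item So the sets can be merged one at a time along a spanning tree of the overlap graph. Each new set meets the current union's neighborhood, since the current union's neighborhood is the union of its members' neighborhoods.
\item Every intermediate union lies in $A\subsetneq X$, so \Cref{lem:uncrossing} applies at each step. The final union is $A$, which is therefore tight.
\end{enumerate}

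\emph{(i)$\Rightarrow$(ii), acceptable subsets of $Y$.} Let $S\subset Y$ be acceptable. As in the last part of the proof of \Cref{prop:nonedge-cut}, with roles swapped, $T=X\setminus N_G(S)$ is an acceptable subset of $X$ with $N_G(T)=Y\setminus S$. By the previous paragraph $T$ is tight. The same cardinality computation as in \Cref{prop:nonedge-cut} then gives $\card{N_G(S)}=\card{S}+1$, so $S$ is tight.

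The main obstacle is the merging step in (i)$\Rightarrow$(ii). One must check that connectivity of $B(A)$ really forces the overlap graph of the $R_x$ to be connected, and that each intermediate union stays a proper subset of $X$. The containment $R_x\subset A$ supplied by \Cref{lem:internal-tight-cover} handles the latter.
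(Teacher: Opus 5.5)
Your proof is correct, but two steps take a different route from the paper's.

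\textbf{Where you agree with the paper.} For acceptable subsets of $X$, the direction (i)$\Rightarrow$(ii) follows the paper: $Y$-side separation from \Cref{prop:nonedge-cut}, then \Cref{lem:internal-tight-cover}, then repeated \Cref{lem:uncrossing}. You spell out the merging step more carefully via the overlap graph. The disjointness of $A_1$ and $A_2$ there follows because every vertex has a neighbour.

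\textbf{The $Y$-side of (i)$\Rightarrow$(ii).} The paper reruns the internal-cover argument with $X$ and $Y$ swapped, feeding it the original $X$-side hypothesis. You instead reduce to the $X$-side via the complementation $S\mapsto X\setminus N_G(S)$ and the cardinality identity from \Cref{prop:nonedge-cut}. That is shorter and avoids a second pass through the lemma.

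\textbf{The direction (ii)$\Rightarrow$(i).} The paper's argument is purely combinatorial. It takes $T$ to be the $X$-part of the component containing $x$ of $G-(\{y\}\cup N_G(y))$, and checks acceptability using only that $G-y$ is connected. This yields an explicit separating set and uses neither the matching-covered hypothesis nor Ehrhart theory.

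Your argument is polyhedral. Tightness of every acceptable $T\subset X$ means each facet form takes the value $1$ at $\one$. Hence every interior lattice point of $(n+1)P_G$ is $\one+\rho(e)$ for an edge $e$, which gives $h_1=h_{s-1}$, and \Cref{prop:nonedge-cut} converts this back into (i). This is the familiar ``interior point at lattice distance one from every facet'' mechanism behind the Gorenstein property. It relies on the earlier lattice-point machinery rather than on the graph structure.

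\textbf{A shortcut.} You can skip the Ehrhart count entirely. For a non-edge $(x,y)$, the point $\eb_x+\eb_y$ lies in the affine hull of $P_G$ and satisfies all coordinate inequalities, but it is not in $P_G$. So it violates the inequality of some acceptable $T\subset X$. The value of that form at $\eb_x+\eb_y$ is negative exactly when $x\in T$ and $y\notin N_G(T)$, and $T$ is tight by (ii).
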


\begin{proof}
	Assume (i).
	By the equivalence of the $X$-side and $Y$-side separation conditions in \Cref{prop:nonedge-cut}, for every non-edge $(x,y)\in X\times Y$ there exists a tight acceptable set $S\subset Y$ such that $y\in S$ and $x\notin N_G(S)$.
	In particular, the hypothesis of \Cref{lem:internal-tight-cover} is satisfied.

	Let $A\subset X$ be an acceptable set.
	By \Cref{lem:internal-tight-cover}, every $x\in A$ belongs to a tight subset $R_x\subset A$.
	Since the graph induced by the edges between $A$ and $N_G(A)$ is connected, the sets $R_x$ cover $A$, and all intermediate unions remain contained in the proper subset $A\subsetneq X$, repeated use of \Cref{lem:uncrossing} shows that their union, namely $A$, is tight.
	Hence every acceptable set in $X$ is tight.

	For acceptable subsets of $Y$, the symmetric argument uses the original $X$-side condition in (i) and gives the same conclusion.
	Thus (ii) holds.

	Conversely, assume (ii), and let $(x,y)\in X\times Y$ be a non-edge.
	Since $G$ is $2$-connected, $G\setminus y$ is connected.
	Let $H$ be the induced subgraph on
	\[
		(X\setminus N_G(y))\sqcup (Y\setminus\set{y}),
	\]
	and let $T$ be the set of vertices in $X$ lying in the connected component of $H$ that contains $x$.
	Then $x\in T$ and $y\notin N_G(T)$.
	Moreover, the graph induced by the edges between $T$ and $N_G(T)$ is connected by construction.
	Since $G\setminus y$ is connected, every other connected component of $H$ is adjacent to a vertex of $N_G(y)$.
	It follows that the graph induced by the edges between $X\setminus T$ and $Y\setminus N_G(T)$ is connected; it contains the edges from $y$ to $N_G(y)$ and all other components of $H$ attach to these vertices.
	This graph has at least one edge.
	Hence $T$ is acceptable.
	By (ii), the set $T$ is tight, proving (i).
\end{proof}

\section{Proof of the main theorem}

We now combine the coefficient interpretation from \Cref{sec:coefficient} with the graph-theoretic criterion from \Cref{sec:graph}.
This completes the passage from the numerical condition $h_1=h_{s-1}$ to the Ohsugi--Hibi Gorenstein criterion in the $2$-connected case.
The block decomposition then gives the stated result for arbitrary bipartite graphs.

We use the following form of the Ohsugi--Hibi criterion, rewritten in the terminology of acceptable sets.
In their notation the statement is formulated for the toric ring of $P_G$, which is naturally isomorphic to our edge ring $\kk[G]$ as a standard graded algebra.

\begin{thm}[{Ohsugi--Hibi Gorenstein Criterion \cite[Theorem~2.1(a$'$)]{ohsugi2006GorEdge}}]\label{thm:ohcriterion}
	Let $G$ be a $2$-connected bipartite graph with bipartition $X\sqcup Y$.
	Then $\kk[G]$ is Gorenstein if and only if $G$ has a perfect matching and
	\[
		\card{N_G(T)}=\card{T}+1
	\]
	for every acceptable set $T\subset X$.
\end{thm}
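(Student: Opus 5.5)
We prove the criterion through the standard characterization of Gorenstein normal toric rings by interior lattice points. By the normality result cited in the introduction, $\kk[G]$ is the Ehrhart ring of $P_G$ with respect to the lattice $L=\ZZ\rho(E(G))$. For a connected bipartite graph, $L$ should equal $\setcond{z\in\ZZ^d}{\sum_{x\in X}z_x=\sum_{y\in Y}z_y}$, because spanning-tree edge vectors generate this group. By the Danilov--Stanley description of the canonical module, $\kk[G]$ is Gorenstein if and only if there are an integer $\delta\ge 1$ and a lattice point $a\in\int(\delta P_G)$ such that every primitive facet form $\ell_F$ (normalized so that $\ell_F(P_G)\ge 0$, $\ell_F$ takes integer values on $L$, and $\ell_F$ is primitive there) satisfies $\ell_F(a)=1$. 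In that case the interior lattice points of $kP_G$ are exactly $a+((k-\delta)P_G\cap\ZZ^d)$.

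The first step is to write down the primitive facet forms. Since $G$ is $2$-connected, every vertex is ordinary, so by \Cref{subsec:edge-polytopes} the facets are:
\begin{itemize}
\item $z_v\ge 0$ for each vertex $v$;
\item $\ell_T(z)=\sum_{y\in N_G(T)}z_y-\sum_{x\in T}z_x\ge 0$ for each acceptable $T\subset X$.
\end{itemize}
All these forms are integer-valued on $L$, and each one attains the value $1$ on some edge vector, so each is primitive on $L$. For $z_v$, take any edge at $v$. For $\ell_T$, take an edge between $N_G(T)$ and $X\setminus T$; such an edge exists because $G$ is connected and $T$ is acceptable, so $T\ne X$ and $N_G(T)\neq Y$. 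Checking this normalization, and in particular that $L$ is the right lattice, is the step I expect to need the most care.

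For the direction ($\Leftarrow$), take $\delta=n$ and $a=\one$, where $\card{X}=\card{Y}=n$. The sum of the edge vectors of a perfect matching is $\one$, so $\one\in nP_G$. Every coordinate form takes the value $1$ at $\one$, and $\ell_T(\one)=\delta_G(T)=1$ by assumption. Since all facet forms are strictly positive at $\one$, the point $\one$ is interior, and the criterion gives the Gorenstein property.

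For the direction ($\Rightarrow$), the Gorenstein point $a$ satisfies $a_v=1$ for every vertex $v$ because of the coordinate facets, so $a=\one$. Since $a$ lies in $\delta P_G$, the $X$-coordinates and $Y$-coordinates each sum to $\delta$. Hence $\card{X}=\delta=\card{Y}$, and $\one\in nP_G\cap\ZZ^d$. By normality, or directly by the integer decomposition property of bipartite edge polytopes, $\one$ is a sum of $n$ edge vectors. Such a sum is precisely a perfect matching. Finally, $\ell_T(\one)=1$ gives $\card{N_G(T)}=\card{T}+1$ for every acceptable $T\subset X$.
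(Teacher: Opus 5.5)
The paper gives no proof of this statement; it imports the result from Ohsugi--Hibi, so there is no internal argument to compare with. Your derivation is correct and self-contained, given the facet description recalled in \Cref{subsec:edge-polytopes} and the interior-point characterization of Gorenstein normal affine semigroup rings. That characterization says the canonical module is spanned by the interior monomials, so the ring is Gorenstein exactly when some $a$ satisfies $\ell_F(a)=1$ for every primitive support form.

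Both halves of the facet description are genuinely used. Your ($\Rightarrow$) direction needs that every acceptable $T\subset X$ really supports a facet. Your ($\Leftarrow$) direction needs that the coordinate forms and the $\ell_T$ are the only facets.

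The step you flagged as delicate is not needed. Primitivity of $z_v$ and $\ell_T$ only uses two facts: they are integral on $\ZZ^d\supset L$, and each takes the value $1$ on some $\rho(e)\in L$. Also $\one\in L$ in ($\Leftarrow$) because it is the sum of the edge vectors of a perfect matching. So you never use the explicit description $L=\setcond{z\in\ZZ^d}{\sum_{x\in X}z_x=\sum_{y\in Y}z_y}$, though that description is correct by your spanning-tree argument.

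What your route adds over the bare citation is transparency. It shows the Gorenstein point is forced to be $\one$ at level $\delta=n$, which gives $s=n-1$. It also shows the criterion is exactly the statement that $\ell_T(\one)=\delta_G(T)=1$ for every acceptable $T$. This is the same facet evaluation the paper later carries out at $\one+\eb_x+\eb_y$ in \Cref{prop:difference,prop:nonedge-cut}, so your argument fits naturally with the rest of the paper.
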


\begin{prop}\label{prop:two-connected-main}
	Let $G$ be a $2$-connected bipartite graph and write
	\[
		h(\kk[G];t)=h_0+h_1t+\cdots+h_st^s.
	\]
	If $h_s=1$ and $h_1=h_{s-1}$, then $\kk[G]$ is Gorenstein.
\end{prop}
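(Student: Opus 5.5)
The plan is to chain together the results of \Cref{sec:coefficient} and \Cref{sec:graph} and finish with \Cref{thm:ohcriterion}. The key first step is to convert $h_s=1$ into a graph-theoretic property. By the criterion of Hatasa--Kowaki--Matsushita~\cite{hatasa2025pseudo}, a $2$-connected bipartite graph $G$ has $\kk[G]$ pseudo-Gorenstein if and only if $G$ is matching-covered. So $h_s=1$ makes $G$ a $2$-connected matching-covered bipartite graph. This is exactly the standing hypothesis of \Cref{lem:strict-hall}, \Cref{prop:nonedge-cut}, \Cref{lem:internal-tight-cover} and \Cref{prop:graph}.

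We need a little care with degenerate cases. If $G$ is a single edge $K_2$, which is counted as $2$-connected under the ordinary-vertex convention, then $\kk[G]$ is a polynomial ring in one variable. It is therefore Gorenstein, and we dispose of this case directly. Otherwise $G$ is a genuinely $2$-connected bipartite graph, and the setup of \Cref{sec:coefficient} applies. With a bipartition $X\sqcup Y$ and $\card{X}=\card{Y}=n$, we have $s=n-1$ and the notation used there.

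Next, \Cref{prop:nonedge-cut}, via the implication (i)$\Rightarrow$(ii), turns the hypothesis $h_1=h_{s-1}$ into the following statement: every non-edge $(x,y)\in\calN_G$ is separated by a tight acceptable set $T\subset X$ with $x\in T$ and $y\notin N_G(T)$. This is condition (i) of \Cref{prop:graph}. That proposition then gives that every acceptable set, and in particular every acceptable $T\subset X$, satisfies $\card{N_G(T)}=\card{T}+1$. A matching-covered graph has a perfect matching. Hence both hypotheses of \Cref{thm:ohcriterion} hold, and $\kk[G]$ is Gorenstein.

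The substantive work lies in the earlier results; here the only real obstacle is justifying the passage from $h_s=1$ to the matching-covered property. If one prefers not to cite \cite{hatasa2025pseudo} as a black box, the route is as follows. Use Ehrhart reciprocity to show that $h_s=1$ means $\int(kP_G)$ contains a unique lattice point at the first degree where it is nonempty. Every coordinate hyperplane is a facet, so interior points have positive coordinates. From this one deduces $\card{X}=\card{Y}$ and that $\one$ is the unique interior point of $nP_G$, and then that every edge lies in a perfect matching. We expect to simply invoke the cited characterization.
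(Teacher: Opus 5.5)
Your proposal is correct and follows the paper's proof essentially step for step: the pseudo-Gorenstein criterion of \cite{hatasa2025pseudo} makes $G$ matching-covered, \Cref{prop:nonedge-cut} and \Cref{prop:graph} turn $h_1=h_{s-1}$ into tightness of every acceptable set, and \Cref{thm:ohcriterion} finishes. Your separate treatment of the single-edge case $K_2$, where $s=0$ and $h_{s-1}$ is undefined, is a small point of care that the paper leaves implicit here; it handles that case through the $s_i=0$ blocks in the proof of \Cref{thm:main}.
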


\begin{proof}
	Since $G$ is $2$-connected and bipartite, the assumption $h_s=1$ implies that $G$ is matching-covered by the pseudo-Gorenstein criterion for bipartite edge rings~\cite{hatasa2025pseudo}.
	In this setting, \Cref{prop:nonedge-cut,prop:graph} show that $h_1=h_{s-1}$ is equivalent to the tightness of every acceptable set.
	Hence every acceptable set is tight.
	Since a matching-covered graph has a perfect matching, \Cref{thm:ohcriterion} implies that $\kk[G]$ is Gorenstein.
\end{proof}

\begin{proof}[Proof of \Cref{thm:main}]
	Isolated vertices do not appear in the generators of $\kk[G]$, so they may be ignored.
	If $G$ has no edges, then $\kk[G]=\kk$ and the assertion is clear.
	Let $B_1,\ldots,B_m$ be the blocks of $G$, and write
	\[
		h(\kk[B_i];t)=h_{i,0}+h_{i,1}t+\cdots+h_{i,s_i}t^{s_i}.
	\]
	By \Cref{prop:block-product},
	\[
		h(\kk[G];t)=\prod_{i=1}^m h(\kk[B_i];t).
	\]
	Hence
	\[
		h_s=\prod_{i=1}^m h_{i,s_i}.
	\]
	Since $h_s=1$ and all coefficients are non-negative integers, we have $h_{i,s_i}=1$ for every $i$.
	If $s=0$, then $h(\kk[G];t)=1$, so $\kk[G]$ is Gorenstein by Stanley's theorem~\cite{stanley1978hilbert}.
	Thus we may assume $s>0$.

	Blocks with $s_i=0$ have $h(\kk[B_i];t)=1$, and hence their edge rings are Gorenstein by Stanley's theorem.
	For any block with $s_i>0$, the graph $B_i$ is $2$-connected and bipartite, and $\kk[B_i]$ is pseudo-Gorenstein.
	By the pseudo-Gorenstein criterion for bipartite edge rings~\cite{hatasa2025pseudo}, this block is matching-covered.
	Applying \Cref{prop:difference} to $B_i$ gives
	\[
		h_{i,s_i-1}-h_{i,1}\ge 0.
	\]
	On the other hand, comparing the coefficients of $t$ and of the next-to-leading term in the product formula gives
	\[
		h_1=\sum_{s_i>0} h_{i,1},
		\qquad
		h_{s-1}=\sum_{s_i>0} h_{i,s_i-1}.
	\]
	Since $h_1=h_{s-1}$, all the non-negative differences
	\[
		h_{i,s_i-1}-h_{i,1}
	\]
	are zero.
	Thus, for each block with $s_i>0$, \Cref{prop:two-connected-main} shows that $\kk[B_i]$ is Gorenstein.

	Therefore every block has a palindromic $h$-polynomial.
	Their product $h(\kk[G];t)$ is palindromic, and Stanley's theorem again implies that $\kk[G]$ is Gorenstein.
	The palindromicity assertion follows at the same time.
\end{proof}

The same argument gives a Gorenstein closure obtained by adjoining all non-edges that are not separated by tight acceptable sets.

\begin{prop}\label{prop:tight-separation-closure}
	Let $G$ be a $2$-connected matching-covered bipartite graph with bipartition $X\sqcup Y$.
	Write
	\[
		h(\kk[G];t)=h_0+h_1t+\cdots+h_st^s.
	\]
	Let $\Fill(G)$ be the set of non-edges $\{x,y\}$, with $x\in X$ and $y\in Y$, that are not separated by any tight acceptable set $T\subset X$, in the sense that no such $T$ satisfies
	\[
		x\in T,\qquad y\notin N_G(T).
	\]
	Let $\widehat{G}$ be the bipartite graph on the same bipartition with
	\[
		E(\widehat{G})
		=
		E(G)\cup\Fill(G).
	\]
	Then the following assertions hold.
	\begin{enumerate}
		\item $\widehat{G}$ is again $2$-connected and matching-covered, and $\kk[\widehat{G}]$ is Gorenstein.
		\item Writing
		      \[
			      h(\kk[\widehat{G}];t)=\widehat{h}_0+\widehat{h}_1t+\cdots+\widehat{h}_{\widehat{s}}t^{\widehat{s}},
		      \]
		      one has $\widehat{s}=s$ and $\widehat{h}_{s-1}=h_{s-1}$.
		\item $\widehat{G}$ is minimal among Gorenstein bipartite supergraphs with this bipartition: if $K$ is a bipartite graph with bipartition $X\sqcup Y$ such that $E(G)\subset E(K)$ and $\kk[K]$ is Gorenstein, then
		      \[
			      E(\widehat{G})\subset E(K).
		      \]
	\end{enumerate}
\end{prop}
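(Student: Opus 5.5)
The plan is to work through $N_{\widehat{G}}(T)$ for tight acceptable sets $T$ and through the lattice-point interpretation of \Cref{prop:difference}. The key observation is that adjoining $\Fill(G)$ does not change the neighbourhood of any tight acceptable set $T\subset X$ of $G$. Indeed, if $\{x,y\}\in\Fill(G)$ with $x\in T$, then $y\in N_G(T)$, since otherwise $T$ would separate this pair. Hence $N_{\widehat{G}}(T)=N_G(T)$, and $T$ remains tight in $\widehat{G}$.

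\emph{Step 1: $T$ stays acceptable in $\widehat{G}$.} The graph between $T$ and $N_{\widehat{G}}(T)$ contains the connected graph $B(T)$ of $G$ on the same vertex set, so it is connected. Likewise, the graph on $(X\setminus T)\sqcup(Y\setminus N_{\widehat{G}}(T))$ contains the corresponding connected graph of $G$ with at least one edge.

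\emph{Step 2: $\widehat{G}$ is $2$-connected and matching-covered.} It is a spanning supergraph of the $2$-connected graph $G$, so it is $2$-connected. Neighbourhoods only grow, so the strict Hall inequality of \Cref{lem:strict-hall} holds in $\widehat{G}$ with $\card{X}=\card{Y}=n$. Strict Hall implies matching-covered: for an edge $\{x,y\}$, every non-empty $A\subset X\setminus\{x\}$ satisfies $\card{N(A)\setminus\{y\}}\ge\card{A}$. Hall's theorem then gives a perfect matching of $\widehat{G}-x-y$, which extends by $\{x,y\}$ to a perfect matching of $\widehat{G}$ containing that edge.

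\emph{Step 3: $\kk[\widehat{G}]$ is Gorenstein and the coefficients agree.} Every non-edge of $\widehat{G}$ is a non-edge of $G$ not in $\Fill(G)$, so it is separated by a tight acceptable $T$ of $G$. By Step 1, the same $T$ separates it in $\widehat{G}$. Thus condition (ii) of \Cref{prop:nonedge-cut} holds for $\widehat{G}$, so $\widehat{h}_1=\widehat{h}_{\widehat{s}-1}$, and \Cref{prop:two-connected-main} gives Gorenstein. This proves (i).

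For (ii), both graphs are $2$-connected matching-covered on $n+n$ vertices, so $\widehat{s}=n-1=s$. By \Cref{lem:h1}, $\widehat{h}_1=h_1+\card{\Fill(G)}$. The proof of \Cref{prop:nonedge-cut} shows that, for a non-edge $(x,y)$ of $G$, $\one+\eb_x+\eb_y\in\int((n+1)P_G)$ holds exactly when no tight acceptable $T$ separates it. So the set counted in \Cref{prop:difference} is precisely $\Fill(G)$, and $h_{s-1}=h_1+\card{\Fill(G)}$. Combining these gives $\widehat{h}_{s-1}=\widehat{h}_1=h_{s-1}$.

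\emph{Step 4: minimality.} Let $K$ be Gorenstein with $E(G)\subset E(K)$ on the bipartition $X\sqcup Y$. Then $K$ is a spanning supergraph of $G$, hence $2$-connected. Since $\kk[K]$ is Gorenstein it is pseudo-Gorenstein, so by \cite{hatasa2025pseudo} $K$ is matching-covered. As $\kk[K]$ is Gorenstein, its $h$-vector is palindromic, so \Cref{prop:difference} applied to $K$ shows that $\one+\eb_x+\eb_y\notin\int((n+1)P_K)$ for every non-edge $(x,y)$ of $K$.

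Now suppose $\{x,y\}\in\Fill(G)\setminus E(K)$. Then $\one+\eb_x+\eb_y\in\int((n+1)P_G)$. Moreover $P_G\subset P_K$, and both polytopes have the same affine span, namely the $X$-sum and the $Y$-sum both equal to $1$, of dimension $2n-2$. Hence relative interiors are monotone: $\int((n+1)P_G)\subset\int((n+1)P_K)$. This is a contradiction, so $E(\widehat{G})\subset E(K)$.

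The main obstacle I expect is Step 4. A direct attempt that takes a tight acceptable set of $K$ and transfers it to $G$ fails, because acceptability, which requires connectivity, need not survive edge deletion. The point of the argument above is to bypass this through the interior-point characterization and monotonicity of relative interiors under the equal affine spans.
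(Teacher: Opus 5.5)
Your proof is correct. Parts (i) and (ii) follow the paper closely: the invariance $N_{\widehat G}(T)=N_G(T)$, the Hall argument, and the identification of $\Fill(G)$ with the set in \Cref{prop:difference}. Reaching Gorensteinness through \Cref{prop:nonedge-cut} and \Cref{prop:two-connected-main}, rather than directly through \Cref{prop:graph} and \Cref{thm:ohcriterion}, is only a cosmetic difference.

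The genuine difference is in (iii). The paper stays combinatorial: it shows that the tight acceptable $T\subset X$ of $K$ separating $(x,y)$ is itself a tight acceptable set of $G$, with $N_G(T)=N_K(T)$. You instead read both separation conditions as lattice-point conditions. You then use $\int((n+1)P_G)\subset\int((n+1)P_K)$, which holds because both polytopes span the same $(2n-2)$-dimensional affine subspace.

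Your route is shorter and gives slightly more. Any spanning bipartite supergraph $K$ inherits strict Hall from $G$, so it is $2$-connected and matching-covered without the pseudo-Gorenstein criterion. Your monotonicity argument then gives $\Fill(G)\setminus E(K)\subset\Fill(K)$, hence $h_{s-1}(K)-h_1(K)\ge\card{\Fill(G)\setminus E(K)}$ for every such $K$, Gorenstein or not. The paper's route instead yields the structural fact that tight acceptable sets of $K$ descend to $G$ with unchanged neighbourhoods.

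Your closing remark is mistaken, though. The direct transfer does work, and it is exactly the paper's proof:
\begin{enumerate}
\item Strict Hall in $G$ gives $\card{N_G(T)}\ge\card{T}+1=\card{N_K(T)}$, so $N_G(T)=N_K(T)$.
\item If the $G$-edges between $T$ and $N_G(T)$ formed $r$ components with $X$-parts $T_i$, strict Hall would give $\card{N_G(T)}\ge\card{T}+r$, forcing $r=1$.
\item The same count applied to $C=Y\setminus N_G(T)$ handles the complementary side.
\end{enumerate}
Acceptability can indeed fail under edge deletion in general, but not for a tight set in a $2$-connected matching-covered graph.
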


\begin{proof}
	We first prove (i).
	Adding edges preserves $2$-connectedness, so $\widehat{G}$ is $2$-connected.
	By \Cref{lem:strict-hall}, $\card{X}=\card{Y}$.
	We show that $\widehat{G}$ is matching-covered.
	Every edge of $G$ is contained in a perfect matching of $G$, and hence also in a perfect matching of $\widehat{G}$.
	Let $\{x,y\}\in\Fill(G)$ be a new edge of $\widehat{G}$.
	For every non-empty subset $A\subset X\setminus\set{x}$, \Cref{lem:strict-hall} gives
	\[
		\card{N_G(A)}\ge \card{A}+1,
	\]
	and therefore
	\[
		\card{N_G(A)\setminus\set{y}}\ge \card{A}.
	\]
	Since $\card{X\setminus\set{x}}=\card{Y\setminus\set{y}}$, Hall's theorem gives a perfect matching of the induced graph $G\setminus\set{x,y}$.
	Together with the edge $\{x,y\}$, this gives a perfect matching of $\widehat{G}$ containing $\{x,y\}$.

	We next verify the $X$-side condition in \Cref{prop:graph} for $\widehat{G}$.
	Let $(x,y)$ be a non-edge of $\widehat{G}$.
	Then $\{x,y\}\notin\Fill(G)$, so there exists a tight acceptable set $T\subset X$ for $G$ such that
	\[
		x\in T,\qquad y\notin N_G(T).
	\]

	We claim that no added edge of $\widehat{G}$ joins a vertex of $T$ to a vertex of $Y\setminus N_G(T)$.
	Indeed, if $a\in T$ and $b\notin N_G(T)$, then the same set $T$ separates the non-edge $\{a,b\}$, so $\{a,b\}\notin\Fill(G)$.
	It follows that
	\[
		N_{\widehat{G}}(T)=N_G(T).
	\]
	Consequently, $T$ remains tight for $\widehat{G}$.
	The two connected graphs in the acceptable condition for $T$ can only gain edges when passing from $G$ to $\widehat{G}$, so $T$ is also acceptable for $\widehat{G}$.

	Thus every non-edge of $\widehat{G}$ is separated by a tight acceptable subset of $X$.
	By \Cref{prop:graph}, every acceptable set of $\widehat{G}$ is tight.
	Since $\widehat{G}$ has a perfect matching, \Cref{thm:ohcriterion} implies that $\kk[\widehat{G}]$ is Gorenstein.

	We next prove (ii).
	The argument in the proof of \Cref{prop:nonedge-cut} identifies $\Fill(G)$ with the set counted in \Cref{prop:difference}, so
	\[
		\card{\Fill(G)}=h_{s-1}-h_1.
	\]
	By \Cref{lem:h1}, the first coefficient of $h(\kk[\widehat{G}];t)$ is
	\[
		\widehat{h}_1=\card{E(\widehat{G})}-\card{V(G)}+1=h_1+\card{\Fill(G)}=h_{s-1}.
	\]
	Since $\widehat{G}$ is again $2$-connected and matching-covered with the same bipartition as $G$, its $h$-polynomial has degree $\widehat{s}=s$.
	Since $\kk[\widehat{G}]$ is Gorenstein, its $h$-polynomial is palindromic; hence $\widehat{h}_s=1$ and $\widehat{h}_{s-1}=\widehat{h}_1=h_{s-1}$.

	Finally, we prove (iii).
	Let $K$ be a bipartite graph with bipartition $X\sqcup Y$ such that $E(G)\subset E(K)$ and $\kk[K]$ is Gorenstein.
	As $G$ is $2$-connected, the supergraph $K$ is also $2$-connected.
	Since $\kk[K]$ is Gorenstein, it is pseudo-Gorenstein; by the pseudo-Gorenstein criterion for bipartite edge rings~\cite{hatasa2025pseudo}, $K$ is matching-covered.
	The $h$-polynomial of $\kk[K]$ is palindromic, so its first and next-to-leading coefficients are equal.
	Let $\{x,y\}$ be a non-edge of $K$, with $x\in X$ and $y\in Y$.
	Then $\{x,y\}$ is also a non-edge of $G$.
	Applying \Cref{prop:nonedge-cut} to the non-edge $(x,y)$ of $K$, there exists a tight acceptable set $T\subset X$ for $K$ such that
	\[
		x\in T,\qquad y\notin N_K(T).
	\]

	We show that $T$ is also a tight acceptable set for $G$.
	Since $E(G)\subset E(K)$, we have $N_G(T)\subset N_K(T)$.
	The tightness of $T$ in $K$ gives $\card{N_K(T)}=\card{T}+1$, while \Cref{lem:strict-hall} applied to $G$ gives
	\[
		\card{N_G(T)}\ge\card{T}+1.
	\]
	Thus $N_G(T)=N_K(T)$.
	In particular, $T$ is tight for $G$ and $y\notin N_G(T)$.

	It remains to check acceptability in $G$.
	Put $D=N_G(T)$.
	If the graph induced by the edges of $G$ between $T$ and $D$ had $r$ connected components, and if $T_1,\ldots,T_r$ were their $X$-parts, then each $T_i$ would be a non-empty proper subset of $X$.
	Hence \Cref{lem:strict-hall} would give
	\[
		\card{D}
		=
		\sum_{i=1}^r \card{N_G(T_i)}
		\ge
		\sum_{i=1}^r \rbra{\card{T_i}+1}
		=
		\card{T}+r.
	\]
	Since $\card{D}=\card{T}+1$, we get $r=1$.
	Hence the graph induced by the edges between $T$ and $D$ is connected.

	Next put $C=Y\setminus D$.
	The set $C$ is non-empty because $y\in C$.
	Since $T$ is non-empty and $G$ is connected, $D$ is non-empty, so $C$ is a proper subset of $Y$.
	Moreover, no vertex of $C$ is adjacent to a vertex of $T$, so $N_G(C)\subset X\setminus T$.
	The symmetric form of \Cref{lem:strict-hall}, together with the equality
	\[
		\card{X\setminus T}
		=
		\card{X}-\card{T}
		=
		\card{Y}-\card{D}+1
		=
		\card{C}+1,
	\]
	gives $N_G(C)=X\setminus T$.
	The same component-count argument, now applied to the subset $C\subset Y$, shows that the graph induced by the edges between $X\setminus T$ and $C$ is connected.
	Thus the complementary graph in the acceptable condition is connected and has an edge.
	Therefore $T$ is acceptable for $G$.

	This tight acceptable set satisfies $x\in T$ and $y\notin N_G(T)$.
	Hence $\{x,y\}\notin\Fill(G)$.
	Therefore every non-edge of $K$ is not in $\Fill(G)$, and so every edge in $\Fill(G)$ belongs to $E(K)$.
	Thus $E(\widehat{G})\subset E(K)$.
\end{proof}

\begin{proof}[Proof of \Cref{thm:blockwise-closure}]
	Isolated vertices do not affect the edge ring, so we ignore them.
	If $G$ has no edges, then $\widehat{G}=G$ and $\kk[G]=\kk$, so the assertion is clear.
	Hence assume that $G$ has at least one edge.
	Let $B_1,\ldots,B_m$ be the blocks of $G$ with at least one edge.
	Write
	\[
		h(\kk[B_i];t)=h_{i,0}+h_{i,1}t+\cdots+h_{i,s_i}t^{s_i}.
	\]
	By \Cref{prop:block-product},
	\[
		h(\kk[G];t)=\prod_{i=1}^m h(\kk[B_i];t).
	\]
	Since $h_s=1$ and all coefficients are non-negative integers, the leading coefficient of each factor is one.
	Thus every block $B_i$ is pseudo-Gorenstein.

	Let $\widehat{B}_i$ be the graph obtained from $B_i$ by adjoining the edges in $\Fill(B_i)$, and write
	\[
		h(\kk[\widehat{B}_i];t)=\widehat{h}_{i,0}+\widehat{h}_{i,1}t+\cdots+\widehat{h}_{i,\widehat{s}_i}t^{\widehat{s}_i}.
	\]
	If $B_i$ is a single-edge block, then $\Fill(B_i)=\emptyset$, $\widehat{B}_i=B_i$, $\kk[B_i]$ is a polynomial ring, and $\widehat{s}_i=s_i=0$.
	Otherwise, $B_i$ is $2$-connected and bipartite.
	By the pseudo-Gorenstein criterion for bipartite edge rings~\cite{hatasa2025pseudo}, the block $B_i$ is matching-covered.
	Hence \Cref{prop:tight-separation-closure} applies to it, so $\widehat{B}_i$ is $2$-connected and matching-covered and $\kk[\widehat{B}_i]$ is Gorenstein; moreover, $\widehat{h}_{i,\widehat{s}_i-1}=h_{i,s_i-1}$.
	Since $B_i$ and $\widehat{B}_i$ have the same bipartition, their $h$-polynomials have the same degree; hence $\widehat{s}_i=s_i$.

	We first prove (i).
	All added edges lie inside the original blocks, so the blocks of $\widehat{G}$ are precisely these filled blocks together with the unchanged single-edge blocks.
	Therefore, by \Cref{prop:block-product},
	\[
		h(\kk[\widehat{G}];t)=\prod_{i=1}^m h(\kk[\widehat{B}_i];t).
	\]
	Each factor is palindromic; hence this product is palindromic, and its degree is $\widehat{s}=\sum_i\widehat{s}_i=\sum_i s_i=s$.
	In particular, $\widehat{h}_{\widehat{s}}=1$.
	Since $\widehat{G}$ is bipartite, $\kk[\widehat{G}]$ is a Cohen--Macaulay domain.
	Stanley's theorem~\cite{stanley1978hilbert} implies that $\kk[\widehat{G}]$ is Gorenstein.

	We next prove (ii).
	Comparison of next-to-leading coefficients in the two product formulas gives
	\[
		\widehat{h}_{s-1}
		=
		\sum_{s_i>0}\widehat{h}_{i,\widehat{s}_i-1}
		=
		\sum_{s_i>0}h_{i,s_i-1}
		=
		h_{s-1}.
	\]

	Finally, we prove (iii).
	The asserted blockwise minimality is trivial for single-edge blocks and follows from the minimality assertion in \Cref{prop:tight-separation-closure} for all other blocks.
\end{proof}

\begin{rem}
	The argument shows that possible counterexamples to palindromicity with $h_s=1$ and $h_1=h_{s-1}$ cannot occur among bipartite graphs.
	Without the bipartite assumption, such counterexamples do exist.
	For instance, as observed in~\cite{hatasa2025pseudo}, the Petersen graph has
	\[
		h(\kk[G];t)=1+5t+15t^2+25t^3+5t^4+t^5,
	\]
	so $h_s=1$ and $h_1=h_{s-1}$, but the $h$-polynomial is not palindromic.
\end{rem}

\subsection*{Acknowledgments}
The author would like to thank Nobukazu Kowaki and Koji Matsushita for their previous collaboration on pseudo-Gorenstein edge rings.
The author is also grateful to Tam\'as K\'alm\'an for a discussion on Gorenstein closures.

\bibliographystyle{plain}
\bibliography{References}

\end{document}